\address{Columbia University, New York, NY, 10027, USA}
\address{Uppsala University, Department of Mathematics, P.O. Box 480,751 06 Uppsala, Sweden}
\theoremstyle{theorem}
\newtheorem{thm}{Theorem}[section]
\newtheorem{cor}[thm]{Corollary}
\newtheorem{lem}[thm]{Lemma}
\newtheorem{prop}[thm]{Proposition}
\newtheorem{conj}[thm]{Conjecture}
\theoremstyle{definition}
\newtheorem{defn}[thm]{Definition}
\DeclareMathOperator{\im}{im}
\DeclareMathOperator*{\colim}{colim}
\newcommand{\R}{\mathbb{R}}
\newcommand{\C}{\mathbb{C}}
\newcommand{\Z}{\mathbb{Z}}
\newcommand{\loops}{\mathcal{L}}
\newcommand{\loopr}{\loops_r}
\newcommand{\lT}{\loops T^*}
\newcommand{\Tlr}{\loopr T^*}
\newcommand{\TN}{T^*N}
\newcommand{\TL}{T^*L}
\newcommand{\Tuo}{\mathcal{T}}
\newcommand{\Tuoloop}{\mathcal{T}_{\mathcal{L}}}
\newcommand{\Tuoloopzero}{\mathcal{T}_{\mathcal{L}_0}}
\newcommand{\LLCW}{\loops_{\textrm{\tiny{CW}}} L}
\DeclareMathOperator{\Th}{Th}
\DeclareMathOperator{\U}{U}
\DeclareMathOperator{\Sp}{Sp}
\DeclareMathOperator{\Or}{O}
\newcommand{\UO}[1]{\raisebox{0.01em}{$\U#1$}\hspace{-0.14em}\raisebox{0.02em}{$/$}\hspace{-0.10em}\raisebox{-0.02em}{$\Or#1$}}
\newcommand{\uo}{\raisebox{0.01em}{$\U$}\hspace{-0.14em}\raisebox{0.02em}{$/$}\hspace{-0.10em}\raisebox{-0.02em}{$\Or$}}
\DeclareMathOperator{\id}{id}
\newcommand{\finr}{S_r}
\DeclarePairedDelimiter\norm{\lVert}{\rVert}
\DeclarePairedDelimiter\sqbrac{[}{]}
\DeclarePairedDelimiter\pare{(}{)}
\numberwithin{equation}{section}
\def\co{\colon\thinspace}
\begin{document}

\title[On the immersion classes of nearby Lagrangians]{On the immersion classes of nearby Lagrangians  }

\author[M. Abouzaid, T. Kragh]{Mohammed Abouzaid, Thomas Kragh}
\thanks{This research was performed during the time the first author served as a Clay Research Fellow and the second author was funded by the Carlsberg Foundation. The first author was also partially supported by NSF grant DMS-1308179}

\begin{abstract} We show that the transfer map on Floer homotopy types associated to an exact Lagrangian embedding is an equivalence. This provides an obstruction to representing isotopy classes of Lagrangian immersions by Lagrangian embeddings, which, unlike previous obstructions, is sensitive to information that cannot be detected by Floer cochains.  We show this by providing a concrete computation in the case of spheres.
\end{abstract}

\date{\today}

\keywords{Floer homotopy type, Viterbo transfer, Lagrangian embeddings, Lagrangian immersions}

\maketitle
\setcounter{tocdepth}{1}

\section{Introduction}
\label{sec:intro}

Let $N$ be a closed smooth manifold of dimension $n$.  Its cotangent bundle $\TN$, equipped with the canonical Liouville form, is an exact symplectic manifold.  Let $L \subset \TN$ be a closed exact Lagrangian submanifold.  Much of the study of Lagrangian embeddings in the last $25$ years has centered on the following conjecture of Arnol'd: 
\begin{conj}[Nearby Lagrangian Conjecture]
  $L$ is Hamiltonian isotopic to the zero section of $\TN$.
\end{conj}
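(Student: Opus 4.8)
The plan is to proceed in three stages: first to show that the projection $\pi \co L \to N$ is a homotopy equivalence, then to upgrade this to a diffeomorphism, and finally to deduce from such a diffeomorphism that $L$ is Hamiltonian isotopic to the zero section.

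For the first stage I would run the by-now standard pseudo-holomorphic curve argument: counting holomorphic strips with boundary on $L$ and on a cotangent fiber $T^*_q N$ identifies the Floer cochains of $L$ with the cochains of the based loop space $\loops N$ as $A_\infty$-algebras, and a unit/augmentation argument (following Fukaya--Seidel--Smith, Nadler, Abouzaid, and Kragh) forces $\pi$ to be a simple homotopy equivalence; one may alternatively obtain this through sheaf quantization (Guillermou, Jin). I would then strengthen it using the main theorem of this paper: the Viterbo transfer refines to an \emph{equivalence} of Floer homotopy types, so that the full stable homotopy type of $\loops N$ --- not merely its cochains --- is recovered from $L$. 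Together with the vanishing of the stable Lagrangian Gauss map of $L$, this pins down both the homotopy type of $L$ and the stable tangential/normal data of the hypothetical embedding into $\TN$.

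The second stage is the crux, and it is where I expect to meet the real obstacle. A proper homotopy equivalence between equidimensional connected manifolds is a diffeomorphism as soon as it is a submersion, so it suffices to show that $\pi$ has no critical points. The critical locus of $\pi$ is exactly the ``caustic'' of $L$: the set of $x \in L$ at which $T_x L$ meets the vertical subspace of $T_x \TN$ nontrivially, equivalently where $T_x L$ fails to be transverse to the fiber Lagrangian. I would attempt to show this caustic is empty by feeding the homotopy- and Floer-theoretic rigidity established in the first stage into a parametrized surgery / $h$-cobordism argument: one measures the failure of $L$ to be the graph of a genuine section by obstruction classes living in bordism groups built from the (now trivialized) Gauss and stable normal data, and the new transfer equivalence is what would be needed to kill the residual stable summands. \textbf{This is precisely the step that remains open}: holomorphic curve counts and their homotopy-theoretic enrichments constrain $L$ extremely tightly, but are not presently known to force the caustic to vanish, and without that one cannot even conclude that $L$ is a graph over the zero section.

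Granting the second stage, the third is classical. If $\pi \co L \to N$ is a diffeomorphism, then $(\pi|_L)^{-1}$ composed with the inclusion is a section of $\TN \to N$ whose image is $L$, so $L$ is the graph of a $1$-form $\alpha$ on $N$; pulling the Liouville form back along this section returns $\alpha$, so exactness of $L$ yields $\alpha = df$ for some $f \co N \to \R$. The family of graphs of $t\,df$, $t \in [0,1]$, is then the time-$t$ image of the zero section under the flow of (a compactly supported modification of) the autonomous Hamiltonian $(q,p)\mapsto f(q)$, exhibiting $L$ as Hamiltonian isotopic to the zero section and completing the argument.
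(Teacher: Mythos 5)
The statement you are trying to prove is the Nearby Lagrangian Conjecture itself, and the paper does not prove it: it is stated as an open conjecture of Arnol'd, and the paper only establishes partial results in its direction (the inclusion $L \to \TN$ is a homotopy equivalence with vanishing Maslov class by \cite{abouzaid-nearby,kragh-nearby}, and the new contribution here, Theorem \ref{thm:equivalence_spectra}, that the transfer $\loops_0 j_!$ is an equivalence of spectra, with the consequent obstructions to embedded representatives of immersion classes). So there is no proof in the paper to compare yours against, and your proposal is not a proof either --- you say so yourself at the second stage. Your first stage is consistent with the literature, and your third stage (an exact graph is Hamiltonian isotopic to the zero section via the graphs of $t\,df$) is classical and correct.

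The genuine problem is that your second stage is not merely ``the step that remains open'': as formulated it is aiming at a statement strictly stronger than the conjecture, and that stronger statement is false. You propose to show that the caustic of $L$ is empty, i.e.\ that the projection $\pi\co L \to N$ is a submersion, so that $L$ is the graph of an exact $1$-form. But being Hamiltonian isotopic to the zero section does not imply being a graph: a generic Hamiltonian image of the zero section (already for a time-dependent Hamiltonian on $T^*S^1$ or $T^*\R^n$, suitably compactly supported) has a nonempty caustic, while of course satisfying the conjecture. Hence no amount of Floer-theoretic or homotopy-theoretic rigidity --- including the transfer equivalence of this paper --- can force the caustic to vanish, and any successful approach must produce the Hamiltonian isotopy without first making $L$ graphical (for instance by working up to isotopy, or with generating functions/wave fronts that allow fold singularities). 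In addition, the homotopy-theoretic inputs you cite (simple homotopy equivalence, triviality of the stable Gauss map on loops, the comodule equivalence) constrain only stable tangential and homotopical data; they are not known to control the unstable, pointwise transversality condition defining the caustic, which is why even the corrected, non-graphical version of your second stage has no known argument. So the proposal correctly isolates where the difficulty lies, but the specific reduction you chose cannot be completed even in principle.
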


The outcome of recent progress is that the inclusion of $L$ is necessarily a homotopy equivalence, with $L$ having, in addition, the property that its Maslov class vanishes (see \cites{kragh-nearby,abouzaid-nearby}).  In a different direction, there are special cases, see \cites{abouzaid-exotic,ekholm-smith}, in which we now know that $L$ is diffeomorphic to the zero section. The purpose of this paper is to study a different direction, by finding obstructions to the existence of embedded representatives in a given class of Lagrangian immersions.

\subsection{Lagrangian immersions of spheres}
 We shall state a general result later in this introduction, but it is useful to first discuss the case of spheres. The aforementioned results imply that it suffices to study  immersions in the homotopy class of the zero section. As we discuss in Section \ref{sec:appl-lagr-embedd}, Gromov's $h$-principle implies that classes of such immersions correspond to the homotopy groups of the unitary group. Bott periodicity can be used to compute these groups:
\begin{equation}
  \pi_{n}(U(n)) = \begin{cases} 0 & \textrm{ if $n$ is even} \\
\Z & \textrm{ if $n$ is odd.} 
\end{cases}
\end{equation}
This allows us to restrict attention to odd dimensional spheres, where the immersion class is not unique. Before stating our result for Lagrangian embeddings, we recall what is known in the smooth category:
\begin{lem} \label{lem:audin}
If $n \neq 1,3$, every Lagrangian immersion of a sphere in the homotopy class of the $0$ section of $T^{*} S^{n} $ is regulary homotopic to a smooth embedding. 
\end{lem}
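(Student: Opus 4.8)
The plan is to convert the statement into homotopy theory and reduce it to a computation with Stiefel manifolds and Bott periodicity.

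I would begin with the two $h$-principles. By the Smale--Hirsch theorem, regular homotopy classes of smooth immersions $S^n \to T^*S^n$ in the homotopy class of the zero section $z$ are classified by homotopy classes of bundle monomorphisms $TS^n \hookrightarrow T(T^*S^n)$ lying over maps homotopic to $z$; since $z^*T(T^*S^n) \cong TS^n \oplus T^*S^n$ is stably trivial of rank $2n > n$, hence trivial, this set is (with $z$ as basepoint, and granting that the relevant $\pi_1$-action is trivial) in bijection with $\pi_n(V_n(\R^{2n}))$, where $V_n(\R^{2n})$ is the Stiefel manifold of $n$-frames in $\R^{2n}$. By Gromov's $h$-principle, as recalled in the introduction, the analogous set for Lagrangian immersions and Lagrangian regular homotopies is $\pi_n(\U(n))$, and forgetting the Lagrangian condition defines a map $\phi\co \pi_n(\U(n)) \to \pi_n(V_n(\R^{2n}))$. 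I would then invoke two geometric facts: first, the normal bundle of any Lagrangian immersion $\iota\co L \to T^*N$ is canonically isomorphic to $T^*L$, since along a Lagrangian the symplectic form identifies the normal bundle with the cotangent bundle, so every Lagrangian immersion of $S^n$ has normal bundle isomorphic to $T^*S^n \cong TS^n$ regardless of $\iota$; second, for $n \ge 3$ an immersion in the homotopy class of $z$ is regularly homotopic to an embedding exactly when its Whitney self-intersection number vanishes, necessity being clear and sufficiency the Whitney trick. Thus it would remain to show that each class in the image of $\phi$ contains an embedding.

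For $n$ even this is immediate, as $\pi_n(\U(n)) = 0$: then $z$ is the only Lagrangian immersion in its homotopy class up to Lagrangian regular homotopy, and it is an embedding. For odd $n \ge 5$ one has $\pi_n(\U(n)) = \Z$ and $\pi_n(V_n(\R^{2n})) \cong \Z/2$, and the fibration $SO(n) \to SO(2n) \to V_n(\R^{2n})$ gives, in the stable range, a short exact sequence
\[
0 \to \operatorname{coker}\bigl(\pi_n SO(n) \to \pi_n SO\bigr) \to \pi_n(V_n(\R^{2n})) \to \ker\bigl(\pi_{n-1}SO(n) \to \pi_{n-1}SO\bigr) \to 0,
\]
whose right-hand term is generated by the class $[TS^n]$ of the tangent bundle and detects the normal bundle of an immersion, while the left-hand term is where the self-intersection number lives. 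If $n \notin \{1,3,7\}$ then $S^n$ is not parallelizable, so $[TS^n]\neq 0$; as the middle group has order $2$ the right-hand map is an isomorphism, so the regular homotopy class of an immersion in the homotopy class of $z$ is determined by its normal bundle, and since every Lagrangian immersion has the same normal bundle as $z$ it is regularly homotopic to $z$, an embedding. The case $n = 7$ must be treated apart, since $S^7$ is parallelizable and now $\pi_7(V_7(\R^{14})) \cong \Z/2$ is the self-intersection group; here $\phi$ is the composite of the realification $\pi_7(\U) \to \pi_7(SO)$ with the map $\pi_7(SO) \to \pi_7(V_7(\R^{14}))$ above, and I would use Bott periodicity to see that the realification is multiplication by $2$, so that its image lies in $2\cdot\pi_7(SO) = \operatorname{im}(\pi_7 SO(7) \to \pi_7 SO) = \ker\bigl(\pi_7 SO \to \pi_7 V_7(\R^{14})\bigr)$; thus $\phi = 0$ and the Lagrangian immersions of $S^7$ are regularly homotopic to embeddings.

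The step I expect to be the real obstacle is this last piece of Bott-periodicity bookkeeping: identifying the two end terms of the exact sequence in each residue of $n$ modulo $8$, and in the parallelizable case $n=7$ pinning down the realification map exactly. The same computation is what singles out $n = 3$ as the only odd exception --- there $S^3$ is parallelizable but the realification $\pi_3(\U) \to \pi_3(SO)$ is an isomorphism, so $\phi$ is onto and there is a Lagrangian immersion of $S^3$ with odd self-intersection number, which cannot be made embedded --- while $n = 1$ already fails classically, as the figure-eight immersion of the circle shows.
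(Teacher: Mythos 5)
Your proposal is correct in substance but follows a genuinely different route from the paper. The paper never touches Stiefel manifolds: it composes the given Lagrangian immersion with the standard (Whitney) Lagrangian immersion of a neighbourhood of the zero section into $\C^n$, invokes Audin's theorem that a Lagrangian immersion of $S^n$ in $\C^n$ has an odd number of double points together with a parity formula for double points of a composition with an odd-degree projection, concludes that the original immersion has an even number of double points, and finishes with the Whitney trick. You instead work entirely on the formal side: Smale--Hirsch plus Gromov's $h$-principle reduce the problem to showing that the forgetful map $\pi_n(\U(n)) \to \pi_n(V_n(\R^{2n}))$ vanishes, which you handle via the exact sequence of $\mathrm{SO}(n) \to \mathrm{SO}(2n) \to V_n(\R^{2n})$ (normal bundles kill everything for odd non-parallelizable $n$) and via the factor of $2$ in realification $\pi_7(\U) \to \pi_7(\mathrm{SO})$ for $n=7$; your Bott-periodicity bookkeeping is indeed correct, and it also correctly isolates $n=3$ as the odd exception. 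What your approach buys is a stronger conclusion --- every such Lagrangian immersion is smoothly regularly homotopic to the zero section itself, with no double-point counting and no Whitney trick --- at the cost of several classical identifications that you assert rather than prove: chiefly that the difference class of the twisted bundle monomorphism is the image of $[g]$ under the orbit map $\U(n) \subset \mathrm{SO}(2n) \to V_n(\R^{2n})$ (equivalently, that the boundary of the difference class in $\pi_{n-1}(\mathrm{SO}(n))$ is the difference of normal-bundle clutching classes), and that the kernel of $\pi_{n-1}(\mathrm{SO}(n)) \to \pi_{n-1}(\mathrm{SO})$ is the cyclic group generated by the clutching class of $TS^n$, of order $2$ precisely when $S^n$ is not parallelizable; these are standard but would need references or short arguments to make the proof self-contained, whereas the paper's argument outsources all the work to a single citation of Audin.
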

\begin{proof}
By the Whitney trick, it suffices to show that such an immersion $j$ has an even number of double points. Denote by $\bar{j}$ the composition with the immersion of a neighbourhood of the $0$-section in $\C^{n}$, associated to the Whitney immersion $S^{n} \to \C^{n}$ with one double point. By \cite[Proposition 0.3]{Audin} $\bar{j}$ has an odd number of double points.    Because the degree of the composition of $j$ with the projection to $S^n$ is odd, the parities of the number of self-intersections of $j$ and $\bar{j}$  are different  (see, e.g. \cite[Proposition 5.1.1]{Audin}). Hence $j$ has an even number of double points, which completes the proof.
\end{proof}
Applying the results of this paper to spheres, we shall obtain the following result in Section \ref{sec:appl-lagr-embedd}, which should be compared to the above:
\begin{thm} \label{thm:main-application}
Whenever $n$ equals $1$, $3$, or $5$ modulo $8$, there is a class of Lagrangian immersions of $S^{n}$ in $T^{*} S^{n}$, in the homotopy class of the $0$ section, which does not admit an embedded representative.
\end{thm}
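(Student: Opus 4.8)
The plan is to combine the $h$-principle classification of Lagrangian immersions of $S^{n}$ with the main theorem of this paper: once the reductions below are in place, the latter forces the \emph{stable Lagrangian Gauss map} of any exact Lagrangian embedding of $S^{n}$ in $T^{*}S^{n}$ to be null-homotopic, and I then check that this fails for a suitable immersion class exactly when $n \equiv 1, 3, 5 \pmod 8$. First, since $H^{1}(S^{n}) = 0$ for $n \geq 2$, every Lagrangian immersion --- and in particular every Lagrangian embedding --- of $S^{n}$ in $T^{*}S^{n}$ is automatically exact; and since an exact Lagrangian embedding is a homotopy equivalence (by the results recalled in the introduction), after precomposing with a diffeomorphism of $S^{n}$ it is homotopic to the zero section. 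It therefore suffices to exhibit, within the homotopy class of the zero section, a regular homotopy class of Lagrangian immersions with no Lagrangian embedded representative. By the Gromov--Lees $h$-principle (developed in Section~\ref{sec:appl-lagr-embedd}) such classes form a pointed set --- with base point the zero section --- in bijection with $\pi_{n}(\U(n)) \cong \pi_{n}(\U)$, which for $n$ odd is $\Z$. Stabilising the formal Lagrangian bundle data attaches to a class $k$ a regular-homotopy invariant, the \emph{stable Lagrangian Gauss map} $\gamma_{k} \co S^{n} \to \uo$; concretely $\gamma_{k}$ is the image of $k$ under $\pi_{n}(\U(n)) \xrightarrow{\ \cong\ } \pi_{n}(\U) \to \pi_{n}(\uo)$, the last arrow induced by $\U \to \uo$.

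Next, the key consequence I would extract from the main theorem is that \emph{the stable Lagrangian Gauss map of an exact Lagrangian embedding is null-homotopic}. The mechanism: to an exact Lagrangian immersion $j$ one associates a Floer homotopy type, modelled by a Thom spectrum over the free loop space $\loops S^{n}$ whose twisting records $\gamma_{j}$; the transfer map identifies this with the analogous object attached to the zero section precisely when $j$ is an embedding, and an equivalence of Thom spectra of this type forces the twisting --- hence $\gamma_{j}$ --- to be trivial. On $S^{n}$ the classical part of this obstruction, the Maslov class, is the $H^{1}$-shadow of $\gamma_{j}$ and vanishes for trivial reasons, so what carries content is the full map $\gamma_{j}$, which is inaccessible to Floer cochains. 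Granting this, any regular homotopy class $k$ with an embedded representative lies in $\ker\!\bigl(\pi_{n}(\U) \to \pi_{n}(\uo)\bigr)$.

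Finally, one computes this kernel. From the long exact sequence of the fibration $\Or \to \U \to \uo$ together with Bott periodicity, for $n$ odd the group $\pi_{n}(\uo)$ is $\Z$, $\Z/2$, $\Z$, $0$ according as $n \equiv 1, 3, 5, 7 \pmod 8$, and the map out of $\pi_{n}(\U) \cong \Z$ is, respectively, multiplication by $2$, reduction modulo $2$, an isomorphism, and zero. Hence for $n \equiv 1, 3, 5 \pmod 8$ the class $k = 1$ has $\gamma_{k} \neq 0$, so it admits no Lagrangian embedded representative --- which is the assertion of the theorem. (For $n \equiv 7 \pmod 8$ the invariant is vacuous, which is why that case is absent.)

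The substantive difficulty is the middle step: turning the homotopy-level main theorem into the vanishing of $\gamma_{j}$ for embeddings. This requires (i) a Floer homotopy type for \emph{immersed} exact Lagrangians, accounting for the double-point contributions; (ii) an explicit twisted-Thom-spectrum model for it over $\loops S^{n}$ whose twisting datum is the stabilised Lagrangian Gauss map; and (iii) a detection argument proving that a nontrivial $\gamma_{j}$ genuinely obstructs the equivalence. Step (iii) is the crux: already for $n \equiv 1 \pmod 8$ the nonzero class $\gamma_{k}$ maps to $0$ in $\widetilde{KO}^{0}(S^{n})$, so the underlying virtual vector bundle is blind to it, and one must exploit the finer structure of the map to $\uo$ --- for instance a real Bott-periodic cohomology theory, or a direct computation of homotopy groups of the relevant loop-space Thom spectra.
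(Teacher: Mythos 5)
Your overall skeleton (h-principle classification by $\pi_n(\U)$, an obstruction for embeddings extracted from the main theorem, then a Bott-periodicity computation) matches the paper, and your computation of realification $\pi_n(\U) \to \pi_n(\uo)$ is correct. But the central step you rely on --- ``the stable Lagrangian Gauss map of an exact Lagrangian embedding is null-homotopic,'' i.e.\ $\gamma_j = 0$ in $\pi_n(\uo)$ --- is a genuine gap: it does not follow from Theorem \ref{thm:equivalence_spectra}, and you yourself flag it as unproven (''the crux''). An equivalence of the transfer onto a suspension spectrum only produces a coreduction of the Thom spectrum of $\eta$, and by the Milnor--Spanier/Atiyah argument (Lemma \ref{lem:co_reduction_trivial}, plus a phantom-map argument since $\loops_0 L$ is not a finite complex) a coreduction detects only the underlying stable \emph{spherical fibration} of $\eta$. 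So what the main theorem yields is Proposition \ref{prop:hom_triv}: the composite $BJ \circ \Tuoloopzero$ into $BH$ is null-homotopic --- not the null-homotopy of the map to $\uo$ (equivalently, not the triviality of $\eta$ as a virtual bundle). Your proposed repair, building a Floer homotopy type for \emph{immersed} Lagrangians with double-point corrections and a detection argument, is not carried out here and is not needed; as written, the middle step is asserted, not proved.

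The paper's actual route replaces your obstruction ``image in $\pi_n(\uo)$'' by the weaker one ``image in $\pi_{n-1}(BH)$ of $BJ \circ \Omega q$ applied to the Bott adjoint $j' \in \pi_{n-1}(\Z\times B\U)$,'' obtained by restricting the classifying map of $-\eta$ along $S^{n-1} \to \Omega S^n \subset \loops_0 S^n$. This changes the computation: besides the long exact sequence of $\Or \to \U \to \uo$ (which is all your argument uses), one needs Adams' determination of the image of the $J$-homomorphism to check that composing with $BJ$ does not kill the class (Table \ref{table1}); e.g.\ for $n \equiv 1 \bmod 8$ the image drops from $2\Z \subset \Z$ to a cyclic group of order $c_k/2$, which is still nonzero, and for $n \equiv 3, 5 \bmod 8$ one gets $\Z/2$ and $\Z/c_k$ respectively, while for $n \equiv 7 \bmod 8$ everything dies --- exactly reproducing the statement. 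So your conclusion and the residues $1,3,5 \bmod 8$ are right, but only after weakening your intermediate claim to the one the paper proves and adding the image-of-$J$ input; as a separate small point, your exactness remark via $H^1(S^n)=0$ needs $n \geq 2$, so $n=1$ requires a separate (elementary, Maslov-class) argument.
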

 We shall see in Section \ref{sec:appl-lagr-embedd} that we cannot address the case $7$ modulo $8$ because the map
 \begin{equation}
   \pi_{8k+7}(U) \to \pi_{8k+7}(U/O)
 \end{equation}
vanishes. Also, note that the nearby Lagrangian conjecture would imply the stronger statement that, with the exception of reparametrisations of the zero section by diffeomorphisms, no class of Lagrangian immersions admits a representative which is a Lagrangian embedding.

\subsection{Equivalence of Floer homotopy types}
\label{sec:equiv-floer-homot}

To state the main result of this paper, consider the pullback of the tangent bundle of $N$ by the evaluation map (at the base point) from the component $\loops_{0} N$ of the free loop space consisting of contractible loops.  Let $\loops_{0} N^{-TN}$ denote the Thom spectrum associated to the virtual vector bundle $-TN$:  concretely, we choose a vector bundle $E$ of rank $k$ on $N$ whose direct sum with $TN$ is trivial of rank $k+n$, and define $ \loops_{0} N^{-TN} $ to be the formal shift by $-(k+n)$ of the suspension spectrum of the Thom space of the pullback of $E$ to $\loops_{0} N$.

Given an exact Lagrangian embedding $j \co L \to \TN$, the second author defined a virtual (Maslov) bundle $\eta$ on $\loops_{0} L$ whose construction is recalled in Section \ref{sec:appl-lagr-embedd}, and a restriction map
\begin{equation} \label{eq:homotopy_equivalence_old}
  \loops_{0} N^{-TN} \to \loops_{0} L^{-TL + \eta}
\end{equation}
which generalises, to Thom spectra, a transfer map due to Viterbo.  It is important to note, at this stage, that the existence of this map is a consequence of \emph{symplectic} topology, and that the condition that $L$ be an embedded exact Lagrangian cannot be dropped for such a map to exist. If $N$ is orientable then by the Thom isomorphism theorem the homology of the left hand side agrees with the homology of $\loops_0 N$ shifted by the dimension of $N$.

The construction of this map is natural with respect to adding a virtual vector bundle to the loop space of $N$, which can be pulled back to the loop space of $L$ by the obvious map, and which by abuse of notation we denote by the same name. This means that one can add a copy of the tangent bundle $TN$ on both sides and get a map
\begin{align} \label{eq:homotopy_equivalence}
  \loops_0 j_! \co \Sigma^\infty \loops_{0} N_+ \to \loops_{0} L^{TN-TL + \eta}.
\end{align}
This was specifically noted in \cite[Corollary 14.4]{kragh-transfer}. Here $\Sigma^\infty(-)$ denotes the suspension spectrum, which is also the Thom spectrum defined using the trivial virtual bundle class.

The main result of this paper is the following theorem, proved in Section \ref{sec:homotopy_equivalence_of_spectra}:
\begin{thm} \label{thm:equivalence_spectra}
  The restriction map $\loops_0 j_{!}$ is a homotopy equivalence.
\end{thm}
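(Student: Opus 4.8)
The plan is to exploit the fact that $\loops_0 j_!$ is a map of module spectra over the ring spectrum $\Sigma^\infty \loops_0 N_+$, where the ring structure comes from concatenation of loops (the string topology / Chas–Sullivan product on the Thom spectrum level, cf. the Cohen–Jones description of $\loops_0 N^{-TN}$ as the Floer homotopy type of $T^*N$). Under the identification of both sides with (shifts of) Thom spectra over loop spaces, one knows after work of Kragh and Abouzaid that $\loops_0 L^{TN - TL + \eta}$ is a module over $\Sigma^\infty \loops_0 N_+$ via the map on based loop spaces $\Omega j \co \Omega L \to \Omega N$, which is a homotopy equivalence by the fact (recalled in the introduction) that $j$ itself is a homotopy equivalence and the Maslov class vanishes, so that the virtual bundle $\eta$ restricted to constant loops, and more to the point the whole twisting $TN - TL + \eta$, is trivial on the image. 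First I would reduce the theorem to showing that $\loops_0 j_!$ is an equivalence of $\Sigma^\infty \loops_0 N_+$-modules, and then further to showing that it is nonzero, or more precisely that it induces an isomorphism on $\pi_0$ after applying a suitable unit/counit, by a standard argument: a map of modules over a connective ring spectrum which is an iso on the bottom homotopy group and whose source and target are both free of rank one is an equivalence.

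The key steps, in order, would be: (1) set up the $\Sigma^\infty \loops_0 N_+$-module structures on source and target of $\loops_0 j_!$ and check that $\loops_0 j_!$ is a module map — this is essentially the naturality of the Viterbo transfer with respect to the Pontryagin product, and should follow from the construction in \cite{kragh-transfer} together with the observation that the transfer is built from maps of spectra parametrised over the loop space compatibly with concatenation; (2) identify the target $\loops_0 L^{TN - TL + \eta}$, using $\Omega j$, as a cyclic $\Sigma^\infty \loops_0 N_+$-module, i.e. generated by a single element in degree $0$, which uses that $j$ is a homotopy equivalence and that the total twisting virtual bundle is stably trivial along $\loops_0 L$ — here one invokes the vanishing of the Maslov class and the orientability results of \cites{kragh-nearby,abouzaid-nearby}; (3) compute the effect of $\loops_0 j_!$ on $\pi_0$, or on $H_0$, which by the Thom isomorphism and the fact that the transfer on zeroth homology of based loop spaces is the standard (identity, after the equivalence $\Omega j$) map, shows it hits the generator; (4) conclude by the module-theoretic rigidity argument that $\loops_0 j_!$ is an equivalence.

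The main obstacle, I expect, is step (1)–(2): making precise the sense in which $\loops_0 L^{TN - TL + \eta}$ is a free rank-one module over $\Sigma^\infty\loops_0 N_+$, and in particular controlling the twisting bundle $\eta$ well enough to trivialise $TN - TL + \eta$ along all of $\loops_0 L$ (not just at constant loops). This requires understanding $\eta$ as the pullback of a bundle on $\loops_0 L$ arising from the Lagrangian Gauss map / path of Lagrangian planes, and showing its class is killed once combined with $TN - TL$; the vanishing of the Maslov class handles the $\pi_1(\loops_0 L)$-part but one needs the full stable triviality, which is where the embedded (as opposed to merely immersed) hypothesis and the existence of the Viterbo transfer itself are really used. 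A secondary subtlety is that one must work in an appropriate category of spectra where these Thom spectra are genuinely modules over $\Sigma^\infty\loops_0 N_+$ and where the rigidity argument applies; I would phrase everything in terms of the Cohen–Jones–Segal construction of Floer homotopy types and the known ring/module structures there, so that the only genuinely new input is the compatibility of Viterbo's transfer with these structures, together with the triviality of the twisting data that the earlier results on nearby Lagrangians provide.
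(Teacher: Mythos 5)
Your proposal has two genuine gaps, and together they make the argument circular rather than merely incomplete. First, the structure you want to use does not exist in the form you invoke it: $\Sigma^\infty \loops_0 N_+$ is not a ring spectrum under concatenation of loops. The Chas--Sullivan/Cohen--Jones product lives on the Thom spectrum $\loops_0 N^{-TN}$ (free loops cannot be concatenated without the intersection-theoretic correction encoded by $-TN$), and no spectrum-level module structure of the Floer homotopy type over that ring spectrum, compatible with the Viterbo transfer, is available in the sources the paper builds on; producing one is a substantial technical problem in its own right. The paper deliberately sidesteps this by using the much softer \emph{comodule} structure induced by the diagonal $X \to X \times X$ (Equation \eqref{eq:comodule_structure}), which is trivial to implement on Conley index pairs and whose compatibility with the transfer is exactly Theorem \ref{thm:main-technical-theorem}; this diagonal comodule structure is dual to Viterbo's cup-product argument, not a Pontryagin-type module structure.

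Second, and more seriously, your step (2) --- exhibiting $\loops_0 L^{TN-TL+\eta}$ as a free rank-one module --- requires trivialising $TN-TL+\eta$ as a stable \emph{spherical} fibration over all of $\loops_0 L$. That statement is essentially Proposition \ref{prop:hom_triv} (combined with Atiyah's result on $TN-TL$), which the paper \emph{deduces from} Theorem \ref{thm:equivalence_spectra}; you acknowledge this is "where the embedded hypothesis and the existence of the Viterbo transfer itself are really used," but you offer no independent argument, so the proposal assumes the conclusion at its key step. The paper's proof needs far less: only $H\Z$-orientability of $\eta$ and $TN-TL$ (from the vanishing of the Maslov class and relative spin, \cite[Corollary 7.5]{kragh-nearby}), which yields a Thom isomorphism after smashing with $H\Z$. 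Feeding the comodule diagram of Theorem \ref{thm:main-technical-theorem} into that Thom isomorphism, using that the transfer is degree one on $H_0$ and that $\loops_0 j$ is a homotopy equivalence, shows $\loops_0 j_! \wedge \id_{H\Z}$ is an equivalence, and a Hurewicz argument for bounded-below spectra then upgrades this to the statement of the theorem. Your bottom-degree computation in step (3) is sound and parallels this last part, but without replacing the module structure and the freeness claim by inputs that are actually available, the proposed route does not go through.
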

In Proposition \ref{prop:hom_triv}, we show that $\loops_{0} L^{TN-TL + \eta}$ has the same homotopy type as $ \Sigma^\infty \loops_{0} N_+$ if and only if the classifying map for $\eta$ as a stable sphere bundle is null-homotopic. This gives a concrete criterion for finding an obstruction to $\loops_0 j_{!}$ being a homotopy equivalence, which, combined with classical computation of homotopy groups, readily yields the applications discussed in Theorem \ref{thm:main-application}.

The main technical result of this paper, used to prove Theorem \ref{thm:equivalence_spectra}, is that the transfer maps are compatible with the classical map of loop spaces
\begin{equation}
    \loops_{0} L \stackrel{\loops_{0} j}{\to} \loops_{0} N.
\end{equation}
To state the compatibility, we use the fact that the diagonal inclusion of a space $X$ into the product $X\times X$ gives rise to a natural map of Thom-spectra
\begin{align} \label{eq:comodule_structure}
  X^V \to X_+ \wedge X^V,
\end{align}
where $V$ is any virtual vector bundle over $X$, and the smash product of a space and a spectrum is easily defined level-wise (as opposed to smash products of spectra). The two cases of this that we are interested in are the two vertical maps in the following theorem, which will imply Theorem \ref{thm:equivalence_spectra}.
\begin{thm} \label{thm:main-technical-theorem}
  The following diagram commutes
  \begin{equation} \label{eq:commutative_diagram}
    \xymatrix{  \Sigma^\infty\loops_{0} N_+ \ar[rr]^{\loops_{0} j_{!}} \ar[rr]  \ar[d]& &   \loops_{0} L^{TN-TL + \eta}  \ar[d] \\ 
   \loops_{0} N_{+} \wedge \Sigma^\infty \loops_{0} N_+  \ar[r]^-{\id \wedge \loops_{0} j_{!}} &  \loops_{0} N_{+} \wedge    \loops_{0} L^{TN-TL + \eta}   & \loops_{0} L_{+} \wedge    \loops_{0} L^{TN-TL + \eta}    \ar[l]_-{\loops_{0} j \wedge \id} .  }  
  \end{equation}
\end{thm}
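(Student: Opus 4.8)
The plan is to prove Theorem~\ref{thm:main-technical-theorem} by passing to the finite-dimensional models that enter the construction of the Viterbo transfer in \cite{kragh-transfer}, where each map in \eqref{eq:commutative_diagram} becomes an honest map of spaces and the square reduces to the standard compatibility of a Pontryagin--Thom collapse with the diagonal coactions \eqref{eq:comodule_structure}, the only nontriviality being a canonical homotopy coming from the deformation retraction of a tubular neighbourhood onto its zero section. Conceptually, the maps \eqref{eq:comodule_structure} are exactly the coactions exhibiting each side of \eqref{eq:commutative_diagram} as a comodule over $\Sigma^\infty \loops_0 N_+$, and the content of the theorem is that $\loops_0 j_!$ is a morphism of such comodules.

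First I would recall, following \cite{kragh-transfer}, that $\loops_0 j_!$ is assembled from maps defined on finite-dimensional approximations: for each level $r$ there is a closed manifold approximating $\loops_0 N$, carrying the pullback of the ambient bundle used in \eqref{eq:homotopy_equivalence}, inside which the corresponding approximation of $\loops_0 L$ sits with normal data realising $TN - TL + \eta$; the approximating maps to $\loops_0 L$ and to $\loops_0 N$ are compatible with $\loops_0 j$ by construction, and the level-$r$ transfer is the collapse onto a tubular neighbourhood of this submanifold. The one structural feature I would isolate is that this collapse $c_r$ is a map \emph{over} $\loops_0 N$: composing it with the projection of the Thom space to the approximation of $\loops_0 L$ and then with $\loops_0 j$ agrees, up to the homotopy supplied by retracting the tubular neighbourhood onto its zero section, with the evaluation of the approximation of $\loops_0 N$ to $\loops_0 N$ itself; outside the tubular neighbourhood both composites land at the basepoint, so nothing is imposed there.

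Granting this, the diagram chase is immediate. Applying the coaction maps of \eqref{eq:comodule_structure} to $c_r$ and tracing a loop $\gamma$ lying in the tubular neighbourhood, the composite along the top, down the right-hand vertical, and back along $\loops_0 j \wedge \id$ produces the pair formed by the image in $\loops_0 N$ of the retraction $p(\gamma)$ of $\gamma$ onto the submanifold together with $c_r(\gamma)$, while the composite down the left-hand vertical and along $\id \wedge \loops_0 j_!$ produces the image of $\gamma$ itself together with $c_r(\gamma)$. These differ only in the first coordinate, and the tubular-neighbourhood retraction $\rho_t$, with $\rho_0 = \id$ and $\rho_1 = p$, provides the homotopy $(\gamma, t) \mapsto (\overline{\rho_t(\gamma)}, c_r(\gamma))$ between them, extended by the basepoint outside the tubular neighbourhood, where it is continuous since $c_r(\gamma)$ is already the basepoint there. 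One then checks that these homotopies are natural in the structure maps $r \to r+1$ and in the stabilisation maps defining the two Thom spectra --- which holds because the diagonals, the collapses, and the retractions are all built compatibly with those maps --- and hence assemble, in the (homotopy) colimit, to a homotopy witnessing the commutativity of \eqref{eq:commutative_diagram}.

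The main obstacle I expect is not this chase but the first step: verifying that the transfer of \cite{kragh-transfer} genuinely has the ``collapse over the base $\loops_0 N$'' form invoked above. Concretely, one must identify how the finite-dimensional approximation of $\loops_0 L$ is embedded in that of $\loops_0 N$, confirm that its normal bundle carries the Maslov twist $\eta$ in precisely the way needed for the diagonal of $\loops_0 L^{TN - TL + \eta}$ in \eqref{eq:comodule_structure} to be compatible with the collapse, and check that the retraction of the tubular neighbourhood is compatible, up to coherent homotopy, with the evaluation maps to $\loops_0 N$. One should also confirm that adjoining the copy of $TN$ to both sides, as in the map \eqref{eq:homotopy_equivalence}, alters only the ambient bundle and not the underlying spaces over $\loops_0 N$, so that none of the above is affected; the resulting identification of $\loops_0 j_!$ as a morphism of $\Sigma^\infty \loops_0 N_+$-comodules is what then feeds into the proof of Theorem~\ref{thm:equivalence_spectra}.
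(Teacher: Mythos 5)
There is a genuine gap, and it sits exactly where you flag your ``main obstacle'': the transfer of \cite{kragh-transfer} does not have the Pontryagin--Thom form your argument needs. The finite-dimensional approximation of $\loops_0 L$ is never embedded in the approximation of $\loops_0 N$ with normal data realising $TN-TL+\eta$, and $\loops_0 j_!$ is not a collapse onto a tubular neighbourhood of such a submanifold. In the actual construction the level-$r$ map is a quotient of corrected Conley index pairs: one chooses a Hamiltonian which is steep on $DT^*L$ so that the periodic orbits in $D_\epsilon T^*L$ have the largest action values, and the transfer is the map $A_r/B_r \to A_r'/B_r'$ obtained by restricting the index pair to high action as in Equation~\eqref{eq:10} (composed with the negative pseudo-gradient flow to make the quotient well defined), followed by the Thom-space correction of Equation~\eqref{eq:3}. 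The identification of the target spectrum with $\loops L^{TN-TL+\eta}$, where the Maslov bundle $\eta$ appears, is a separate and substantial result (Corollary 14.4 of \cite{kragh-transfer}), not a statement about a normal bundle of an embedding of loop-space approximations. Granting your ``collapse over $\loops_0 N$'' model, your retraction homotopy and diagram chase would indeed work; but establishing that model is not a verification one can expect to carry out --- it would amount to rebuilding the symplectic content of \cite{kragh-transfer} in a form in which it is not available, and the paper stresses that the existence of the transfer is genuinely symplectic and cannot be produced by such soft means.

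The ingredients that actually make the diagram commute are different in kind. For the source, the coaction is defined because the index pairs $(A_r,B_r)$ sit inside the finite-dimensional approximations of $\loops T^*N$, so the diagonal gives $A_r/B_r \to A_{r+}\wedge A_r/B_r \to \loops T^*N_+\wedge A_r/B_r$, corrected by projecting the disc bundle $D\nu_r$ to $A_r$; in the colimit this is identified with the standard comodule map using \cite[Corollary 11.4]{kragh-transfer}. For the target, the key input is \cite[Proposition 10.1]{kragh-transfer} (applied with $M=DT^*L$): the restricted index pairs $(A_r',B_r')$ can be chosen to lie entirely inside $\loops_r DT^*L$, which is what allows one to define a coaction over $\loops T^*L_+ \simeq \loops L_+$ at all, identified in the colimit with the standard one by \cite[Corollary 14.5]{kragh-transfer}. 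Finally, the square commutes at each level up to a homotopy given not by a tubular-neighbourhood retraction but by the negative pseudo-gradient flow used to make the quotient map onto the restricted index pair well defined: without that flow the diagram commutes on the nose, and composing the counterclockwise composite with the same flow on the first smash factor supplies the homotopy. None of these steps is visible in your outline, so as written the proposal reduces the theorem to an unproved (and, in this construction, unavailable) structural claim about $\loops_0 j_!$.
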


While we shall not prove this, the following fact may give the reader a useful perspective: $X_{+}$ is a coalgebra, and the map in Equation \eqref{eq:comodule_structure} makes $X^V$ into a comodule in the category of spectra.  With this in mind, Equation \eqref{eq:commutative_diagram} asserts that the restriction map $ \loops_{0} j_{!} $ is a map of comodules in the category of spectra. This structure is in a sense dual to the cup product on the cohomology of loop spaces used by Viterbo in \cite{viterbo-JDG}.

We end this introduction with an outline of the paper: Section \ref{sec:appl-lagr-embedd} provides the construction of the stable bundle $\eta$. We then state Proposition \ref{prop:hom_triv}, which is a weaker version of Theorem \ref{thm:equivalence_spectra}, that is then used to prove the applications to immersions of spheres. In Section \ref{sec:proof-prop-refpr}, we explain how  Proposition \ref{prop:hom_triv} follows from Theorem \ref{thm:equivalence_spectra}. The constructions of the second author in \cite{kragh-transfer,kragh-nearby} are summarised in Sections \ref{sec:construct_spectrum} and \ref{sec:transfer_map}, with a special focus on the results we shall need. Theorem \ref{thm:main-technical-theorem} is proved in Section \ref{sec:diagram_commutes}, and Theorem  \ref{thm:equivalence_spectra} is derived from it in the last section.

\subsection*{Acknowledgements}
We would like to thank Ivan Smith for suggesting Lemma \ref{lem:audin}.

\section{Application to Lagrangian embeddings}
\label{sec:appl-lagr-embedd}

In this section we will apply Theorem~\ref{thm:equivalence_spectra} to prove Theorem~\ref{thm:main-application} which is intended as a sample application. In fact, we will prove a much stronger result for a general  exact Lagrangian embedding $j \co L \to \TN$; we recall that such a map is a homotopy equivalence by the results of \cite{abouzaid-nearby,kragh-nearby}.

First, we recall the definition of the canonical (up to homotopy) map
\begin{equation} \label{eq:map-L-grassmannian}
  \Tuo \co L \to \uo = \colim_{K\to \infty} \UO{(K)}
\end{equation}
defined for any Lagrangian immersion $j\co L \to \TN$. Indeed, let $i\co N \subset \R^K$ be any embedding. Viewing $\R^K$ as a subset of $\C^K=\R^{2K}$ we get the hermitian bundle $\C^{K} \times N = i^*T\R^{2K}$ which splits canonically as
\begin{align}
  \label{eq:split}
  \C^{K} \times N \cong (\nu\otimes\C) \oplus (TN\otimes \C) \cong (\nu\otimes\C) \oplus T(T^*N)_{\mid N},
\end{align}
where $\nu \to N$ is the normal bundle of the embedding $i$, and $T^*N$ has the almost complex structure induced by the Riemannian structure on $N$ induced by the embedding $i$. The tangent bundle of $T^*N$ (as a hermitian bundle) is canonically isomorphic to the pull-back of the restriction $T(T^*N)_{\mid N}$. So for each point $q$ in $L$ (by taking the tangent bundle) we get a Lagrangian in $T_{j(q)}(T^*N)$, which by direct sum with the canonical Lagrangian $\nu \subset \nu \otimes \C$ and the splitting above defines a Lagrangian in $\C^K$. This defines a map $L \to \UO{(K)}$ (the Grassmannian of Lagrangian sub-spaces in $\C^K$). Since everything is continuous in the embedding $i$, and all embeddings become isotopic after increasing $K$ this defines $\Tuo$ up to homotopy.

Next, we pass to the free loop space, and consider the map 
\begin{equation}
\loops L \xrightarrow{\loops \Tuo} \loops \uo 
\end{equation}
induced by $\Tuo$.
Since $\uo \simeq \Omega^6 \Or$ is a loop space by real Bott periodicity, we have a retraction
\begin{equation}
  p \co \loops \uo \to \Omega \uo.
\end{equation}
Bott periodicity also identifies $\Omega \uo $ with $\Z \times B\Or$. We can now define the virtual bundle $\eta$ appearing in Equation~\eqref{eq:homotopy_equivalence}.
\begin{defn}
  The complement of the Maslov bundle $-\eta$ associated to a Lagrangian immersion $j$ is the virtual real vector bundle over the free loop space of $L$ classified by the map
\begin{align}\label{Jmap}
  \Tuoloop \co \loops L \xrightarrow{\loops \Tuo} \loops \uo \xrightarrow{p} \Omega \uo \simeq \Z \times B\Or.
\end{align}
\end{defn}
We denote by $  \Tuoloopzero$  the restriction of $ \Tuoloop  $ to the component of the loop space consisting of contractible loops.

The next ingredient is the canonical map from $\Or$ to $H$ where $H$ is the stable group of self-homotopy equivalences of spheres, which is called the $J$-homomorphism. \emph{A} canonical delooping of this map
\begin{equation} \label{eq:BJ}
  BJ \co  \Z \times B\Or \to BH
\end{equation}
is given by associating to a virtual vector bundle the corresponding stable sphere bundle (and forgetting its dimension), c.f. \cite{Atiyah}. However, be warned that there is a difference in that paper between the map $J$ and the $J$-homomorphism. In fact, the $J$ there is induced by our $BJ$. The homotopy groups of $BH$ coincide with the stable homotopy groups of spheres shifted up by one except in degree 1 where it is only the units $\pm 1 \subset \Z$, see  \cite[Lemma 1.3]{Atiyah}.

The precise result we shall prove in the next Section is:
\begin{prop}
  \label{prop:hom_triv}
  If $j\co L \to \TN$ is an exact Lagrangian embedding then the composition $BJ \circ \Tuoloopzero$ is null-homotopic,  i.e. the stable sphere bundle associated to $\eta$ is trivial on $\loops_{0} L$.
\end{prop}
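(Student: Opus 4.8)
The plan is to extract the statement from Theorems~\ref{thm:equivalence_spectra} and~\ref{thm:main-technical-theorem} by reinterpreting the comodule structure of \eqref{eq:comodule_structure} as parametrized-homotopy-theoretic data. For a virtual vector bundle $V$ over a space $X$, the Thom spectrum $X^{V}$ \emph{together with} its coaction $X^{V}\to X_{+}\wedge X^{V}$ is the pushforward to a point of the parametrized spectrum (local system of spectra) over $X$ whose fibre over $x$ is the sphere spectrum $\spherespec^{V_{x}}$; equivalently, $\Sigma^{\infty}X_{+}$-comodules are local systems of spectra on $X$, and the comodule $X^{V}$ corresponds to the local system $\spherespec^{V}$. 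When $V$ has virtual rank $0$ this local system is invertible, and invertible local systems of spectra over $X$ are classified by $[X,\operatorname{Pic}(\spherespec)]$, whose identity component is $BH=B\operatorname{GL}_{1}(\spherespec)$; under this identification the class of $\spherespec^{V}$ is precisely $BJ$ applied to the stable class of $V$. Thus $\spherespec^{V}$ is the trivial local system if and only if $BJ$ (composed with a classifying map of $V$) is null-homotopic, i.e.\ the stable sphere bundle of $V$ is trivial. The point of this reformulation is that the bare homotopy type of $X^{V}$ remembers $V$ only up to Thom equivalence, whereas the coaction remembers $\spherespec^{V}$ as a local system---and it is exactly this refinement that Theorem~\ref{thm:main-technical-theorem} supplies beyond Theorem~\ref{thm:equivalence_spectra}.

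With this dictionary in hand the argument runs as follows. Since $j$ is a homotopy equivalence by \cites{abouzaid-nearby,kragh-nearby}, so is $\loops_{0}j\co\loops_{0}L\to\loops_{0}N$, and it induces an equivalence of coalgebras $\Sigma^{\infty}\loops_{0}L_{+}\simeq\Sigma^{\infty}\loops_{0}N_{+}$; using it I would regard the left-hand column of \eqref{eq:commutative_diagram} as giving $\Sigma^{\infty}\loops_{0}N_{+}$ the \emph{trivial} $\loops_{0}L_{+}$-comodule structure, and then read \eqref{eq:commutative_diagram} as the assertion that $\loops_{0}j_{!}$ is a map of $\loops_{0}L_{+}$-comodules. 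Combined with Theorem~\ref{thm:equivalence_spectra}, $\loops_{0}j_{!}$ is then an equivalence of $\loops_{0}L_{+}$-comodules; passing to local systems, this exhibits $\spherespec^{TN-TL+\eta}$ over $\loops_{0}L$ as trivial, so $BJ$ annihilates the stable class of $TN-TL+\eta$. Since $BJ$ carries direct sums to the $H$-space product on $BH$, it remains to see that $BJ$ annihilates $TN-TL$ over $\loops_{0}L$: this virtual bundle is pulled back along the evaluation map from $j^{*}TN-TL$ on $L$, whose stable sphere bundle is trivial because $j$ is a homotopy equivalence of closed manifolds and the Spivak normal fibration is a homotopy invariant (for $N=S^{n}$ this is immediate, as $TS^{n}$ is stably trivial). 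Hence $BJ$ annihilates $\eta$; and since $\Tuoloopzero$ classifies $-\eta$, while the images of $\eta$ and $-\eta$ under $BJ$ are inverse to one another in the $H$-space $BH$ (so one is null if and only if the other is), we conclude that $BJ\circ\Tuoloopzero$ is null-homotopic.

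The step I expect to require genuine care is the passage from ``$\loops_{0}j_{!}$ is a comodule map which is an equivalence on underlying spectra'' to ``the associated local systems are equivalent, hence the sphere bundle of $TN-TL+\eta$ is trivial'': one must track the comodule (equivalently, parametrized-spectrum) structure all the way through the construction of the transfer map---this is precisely why Theorem~\ref{thm:main-technical-theorem} is isolated and proved---and invoke the classification of invertible parametrized spectra over a space by $[-,\operatorname{Pic}(\spherespec)]$. The bare equivalence of Theorem~\ref{thm:equivalence_spectra} alone would not suffice, since a Thom spectrum does not determine a spherical fibration beyond its Thom-equivalence class. The remaining ingredients---the homotopy-invariance input for $TN-TL$, the multiplicativity of $BJ$, and the bookkeeping with $\pm\eta$---are routine. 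If one prefers to avoid the general classification statement, the same conclusion can be obtained concretely, using the fibrewise equivalences encoded in the comodule map to trivialize the sphere bundle of $TN-TL+\eta$ directly over $\loops_{0}L$.
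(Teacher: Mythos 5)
There is a genuine gap at the central step of your argument. You pass from Theorems \ref{thm:equivalence_spectra} and \ref{thm:main-technical-theorem} to the statement that $\spherespec^{TN-TL+\eta}$ is trivial as a local system over $\loops_0 L$ by invoking the dictionary ``$\Sigma^\infty X_+$-comodules $=$ parametrized spectra over $X$'' and the classification of invertible parametrized spectra by $[\,-\,,\operatorname{Pic}(\spherespec)]$. Neither ingredient is available at the level of what is actually proved here: Theorem \ref{thm:main-technical-theorem} supplies a \emph{single} homotopy-commutative square \eqref{eq:commutative_diagram}, with no higher coherences, and the paper explicitly declines to prove that the coaction makes the Thom spectra into comodules in any structured sense; the classification by $\operatorname{Pic}(\spherespec)$, however, needs the full coherent (parametrized/comodule) data. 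Moreover the dictionary itself is delicate: comodules over $\Sigma^\infty X_+$ are related to local systems (equivalently $\Sigma^\infty \Omega X_+$-modules) only through a Koszul-duality/completion comparison which is not an equivalence without hypotheses, and the paper works with naive prespectra, so the structured statements would have to be built from scratch. As written, then, the decisive step of your proof is unsupported, and your fallback suggestion of ``trivializing the sphere bundle fibrewise'' would in addition have to confront the fact that $\loops_0 L$ is not a finite complex --- an issue your proposal never addresses.

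The premise that forces you into this machinery --- that ``the bare equivalence of Theorem \ref{thm:equivalence_spectra} alone would not suffice'' --- is also mistaken, and the paper's own proof is the counterexample. While a Thom spectrum in general does not determine its spherical fibration, \emph{coreducibility} is a property of the bare homotopy type and it does detect triviality over a finite complex: from Theorem \ref{thm:equivalence_spectra}, together with Atiyah's observation that $TN-TL$ has trivial stable sphere bundle when $j$ is a homotopy equivalence (via the collapse map $\Sigma^\infty N_+ \to L^{TN-TL}$ for an embedding $L \to \R^K \times N$), one gets the equivalence \eqref{eq:coreduction}, which transports the canonical coreduction of $\Sigma^\infty(\loops_0 N)_+$ to $(\loops_0 L)^{\eta}$. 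Lemma \ref{lem:co_reduction_trivial} (Atiyah--Milnor--Spanier \cite{Atiyah}, proved by destabilizing the coreduction to a fibrewise map of sphere bundles, where finiteness of the base is used) then shows that the classifying map is null-homotopic on every finite subcomplex of a CW approximation $\LLCW$, and since $BH$ has finite homotopy groups the $\lim^1$/phantom-map argument of Lemma \ref{lem:phantom_map_0} upgrades this to an actual null-homotopy of $BJ \circ \Tuoloopzero$. Your remaining bookkeeping (killing $TN-TL$ by homotopy invariance, and passing between $\eta$ and $-\eta$ using that their images under $BJ$ are inverse in $BH$) is fine, but it does not repair the main step.
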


The reader may think of the above result as a more concrete version of Theorem \ref{thm:equivalence_spectra}. In order to use it, we recall the statement of the $h$-principle for Lagrangian immersions \cite{gromov-pdr,Lees}: the set of Lagrangian regular homotopy classes homotopic to a given map
\begin{equation}
i_{0} \co  L \to M
\end{equation}
from a smooth manifold $L$ of dimension $n$, to an exact  symplectic manifold $M$ of dimension $2n$, can be identified with the connected components of the space of maps of vector bundles
\begin{align}
  T^*L \to i^{*}_{0} TM 
\end{align}
which have Lagrangian image. Letting $\Sp(TM)$ denote the bundle over $M$ with fibre over $x\in M$ the Lie group of linear automorphisms of  $T_xM$ preserving the symplectic form, we note that the space of such maps of vector bundles is either empty, or a principal homogeneous space (torsor) over the space of sections
\begin{equation}
  L \to i^{*}_{0} \Sp(TM),
\end{equation}
i.e. the space of sections acts transitively and freely.

Let us specialise this to the case 
\begin{equation} \label{eq:assumption_parallelisable}
\parbox{30em}{$L = N$ has tangent bundle whose complexification is trivial, and $i_0$ is the inclusion of the $0$-section. }
\end{equation}
 In this case, $\Sp(TM)$ is the trivial fibre bundle $\Sp(n) \times N$.  Moreover, since the inclusion of $\U(n)$ into $\Sp(n)$ is a homotopy equivalence and the inclusion of $U(n)$ into $\U$ is $2n-1$ connected, we conclude the following.

\begin{lem}
Under Assumption \eqref{eq:assumption_parallelisable}, equivalence classes of Lagrangian immersions in the homotopy class of $i_0$ are classified by homotopy classes of maps from $L$ to $\U$.\qed
\end{lem}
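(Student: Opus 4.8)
The plan is to string together the three ingredients recalled immediately above. First I would invoke the $h$-principle in the form stated there: it identifies the set of equivalence classes of Lagrangian immersions in the homotopy class of $i_0$ with $\pi_0$ of the space $\mathcal{F}$ of bundle maps $T^{*}L \to i_0^{*}TM$ with Lagrangian image, and tells us that $\mathcal{F}$ is either empty or a torsor over the space $\Gamma(L, i_0^{*}\Sp(TM))$ of sections.

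The one step that is not pure bookkeeping — and hence the only, rather mild, obstacle — will be to show that $\mathcal{F}$ is non-empty. For this I would note that the zero section $i_0$ is itself an exact Lagrangian embedding, so the bundle map underlying its differential is a point of $\mathcal{F}$. Using this point to trivialise the torsor then identifies the set of equivalence classes $\pi_0\mathcal{F}$ with $\pi_0\Gamma(L, i_0^{*}\Sp(TM))$.

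It then remains to compute the latter under Assumption~\eqref{eq:assumption_parallelisable}. In that case $L = N$, $M = \TN$, and by \eqref{eq:split} the bundle $i_0^{*}TM = T(\TN)_{\mid N}$ is isomorphic, as a Hermitian and hence symplectic bundle, to $TN\otimes\C$, which is trivial by hypothesis; so $i_0^{*}\Sp(TM)\cong\Sp(n)\times N$ is the trivial bundle, $\Gamma(L, i_0^{*}\Sp(TM)) = \operatorname{Map}(N,\Sp(n))$, and $\pi_0\Gamma(L, i_0^{*}\Sp(TM)) = [N,\Sp(n)]$. Finally I would pass to $\U$: the deformation retraction of $\Sp(n)$ onto its maximal compact subgroup $\U(n)$ gives $[N,\Sp(n)] = [N,\U(n)]$, and since $\dim N = n$ while $\U(n)\hookrightarrow\U$ is $(2n-1)$-connected, obstruction theory identifies $[N,\U(n)]$ with $[N,\U]$. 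Composing these bijections yields the lemma.
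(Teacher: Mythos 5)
Your argument is correct and follows the same route as the paper, which proves this lemma implicitly via the discussion preceding it: the $h$-principle torsor description, triviality of $\Sp(TM)\cong\Sp(n)\times N$ under Assumption~\eqref{eq:assumption_parallelisable}, the homotopy equivalence $\U(n)\hookrightarrow\Sp(n)$, and the $(2n-1)$-connectivity of $\U(n)\hookrightarrow\U$ against $\dim L=n$. The only addition is that you make explicit the basepoint (the differential of the zero section, up to identifying $T^{*}L\cong TL$) used to trivialise the torsor, a step the paper leaves tacit.
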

\begin{cor}
  Isotopy classes of Lagrangian immersions of the sphere $T^*S^{n}$, in the homotopy class of the standard embedding, are classified by $\pi_{n}(\U)$. \qed
\end{cor}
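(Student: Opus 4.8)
The plan is to obtain this as a direct specialisation of the preceding Lemma, taking $L = N = S^{n}$ and $M = T^{*}S^{n}$. The inclusion $i_{0}$ of the zero section \emph{is} the standard embedding, so the Lemma applies provided Assumption~\eqref{eq:assumption_parallelisable} holds; the only point that genuinely needs verification is therefore that the complexification of $TS^{n}$ is trivial.

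To check this I would argue as follows. The normal bundle of $S^{n} \subset \R^{n+1}$ is trivial, so $TS^{n} \oplus \underline{\R} \cong \underline{\R}^{n+1}$, and complexifying gives $(TS^{n}\otimes \C) \oplus \underline{\C} \cong \underline{\C}^{n+1}$. Thus $TS^{n}\otimes\C$ is a stably trivial complex bundle of rank $n$ over $S^{n}$. Its isomorphism class is an element of $\pi_{n-1}(\U(n))$, and the stabilisation map $\pi_{n-1}(\U(n)) \to \pi_{n-1}(\U)$ is an isomorphism: the fibre of $\U(n) \to \U(n+1)$ is $S^{2n+1}$, which is $2n$-connected, so $\pi_{k}(\U(n)) \to \pi_{k}(\U)$ is an isomorphism for every $k < 2n$, in particular for $k = n-1$. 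Hence $TS^{n}\otimes\C$ is honestly trivial, and Assumption~\eqref{eq:assumption_parallelisable} is satisfied.

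With the hypothesis in hand, the preceding Lemma identifies isotopy classes of Lagrangian immersions of $S^{n}$ in $T^{*}S^{n}$ lying in the homotopy class of the zero section with the set of free homotopy classes of maps $S^{n} \to \U$. Since $\U$ is path-connected and an $H$-space, hence a simple space, this set coincides with the based homotopy set $[S^{n},\U]_{*} = \pi_{n}(\U)$, which is the asserted classification. I expect the only mild obstacle to be the final ingredient of the middle paragraph — upgrading stable triviality of $TS^{n}\otimes\C$ to genuine triviality — while everything else is formal once the preceding Lemma is granted.
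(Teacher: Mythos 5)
Your argument is correct and is exactly the intended one: the paper states this corollary without proof as an immediate specialisation of the preceding Lemma to $L=N=S^{n}$, the triviality of $TS^{n}\otimes\C$ (stable triviality plus $\pi_{n-1}(\U(n))\cong\pi_{n-1}(\U)$ in the stable range) being the standard verification of Assumption~\eqref{eq:assumption_parallelisable}, and the H-space property of $\U$ converting free to based homotopy classes. One small wording slip: $S^{2n+1}$ is the quotient $\U(n+1)/\U(n)$ (the base of the fibration $\U(n)\to\U(n+1)\to S^{2n+1}$), not the fibre of the inclusion $\U(n)\to\U(n+1)$, but the connectivity conclusion you draw from it is the right one.
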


From now on, we shall write $j$ either for an isotopy class of Lagrangian immersions, or for the associated homotopy class of maps to $\U$. Consider the quotient
\begin{equation}
  \label{eq:quotient-U-UO}
  q \co \U \to \uo.
\end{equation}
By composing $j$ with $q$ we recover the map from Equation \eqref{eq:map-L-grassmannian}
\begin{align}
  \label{eq:loopuomap}
  \Tuo \co L \to \uo.
\end{align}
Recalling that $\Z \times B\Or $ classifies virtual \emph{real} vector bundles, and the particulars of Bott periodicity, we get when looping $q$ the map
\begin{equation}
  \label{eq:loop-of-q}
  \Omega q \co \Z \times B\U \to \Z \times B\Or
\end{equation}
induced by the map on virtual vector bundles given by forgetting the complex structure.

\begin{proof}[Proof of Theorem~\ref{thm:main-application}]
  By Bott periodicity we have
  \begin{equation}
    \label{eq:Bott-periodicity}
    \pi_{n}(\U) \cong \pi_{n-1}(\Omega \U) \cong \pi_{n-1}(\Z \times B\U),
  \end{equation}
  and we let $j'$ denote a representative of $j$ under this isomorphism. Now consider the commutative diagram
\begin{equation}
  \label{eq:eta_class_diagram}
  \xymatrix{
    \loops S^{n} \ar[r]^{\loops{j}} & \loops \U \ar[r]^{\loops q} & \loops \uo \ar[r]^{p} & \Omega \uo \ar[r]^-{BJ} & BH \\
    \Omega S^{n} \ar[u] \ar[r]^{\Omega j} & \Omega \U \ar[u] \ar[r]^-{\Omega q} & \Omega \uo \ar[u] \ar[r]^-{=} & \Z \times B\Or \ar[u]^{=} \\
    S^{n-1} \ar[u]^i \ar[r]^{j'} & \Z \times B\U \ar[u]^{\simeq},
  }
\end{equation}
where $i$ is the adjoint to the homotopy equivalence $\Sigma S^{n-1} \to S^n$, and the first row of vertical maps are the usual inclusions. By Proposition~\ref{prop:hom_triv} the composition of the horizontal maps in the first row is homotopically trivial. So, we conclude that the composition
\begin{align}
  \label{eq:trivial_composition}
  S^{n-1} \xrightarrow{j'} \Z\times B\U \xrightarrow{\Omega q} \Z \times B\Or \xrightarrow{BJ} BH 
\end{align}
is null-homotopic.

  In Table \ref{table1}, we list the maps induced by $BJ$, $\Omega q$, and their composition on homotopy groups. Here $k=n-1$.
  \begin{table}
    \begin{tabular}{|l|cccccccc|}
      \hline 
      $k \mod 8$       & 0 & 1   & 2 & 3  & 4 &  5 & 6& 7 \\
      \hline
      $\pi_k(\Z\times B\U)$       &$\Z$ & 0 &$\Z$& 0 &$\Z$& 0&$\Z$ &0 \\
      $\pi_k(\Z\times B\Or)$   &$\Z$   &$\Z/2$&$\Z/2$&0&$\Z$& 0& 0 &0 \\
      $\im((\Omega q)_*)$        &$2\Z$ & 0 & $\Z/2$& 0 &$\Z$& 0& 0 &0\\
      $\im((BJ)_*)$     &$\Z/c_k$   & $\Z/2$&$\Z/2$&0&$\Z/c_k$&0&0&0 \\
      $\im(( BJ \circ \Omega q)_*)$ &$\Z/ c'_k $&0&$\Z/2$&0&$\Z/c_k $&0&0&0 \\
      \hline
    \end{tabular}
    \caption{Maps on homotopy groups for $k \geq 1$.}
    \label{table1}
  \end{table}
  The image of $q$ (and thus $\Omega q$) is easily found using the long exact sequence for $\Or \to \U \to \uo$. The image of $BJ$ was computed by Adams and Quillen, see \cite[Theorem 1.1.13]{Ravenel}, and the image of the composition follows from the first two. In the last two rows, $c_k$ is the denominator of the reduced fraction of $B_{2k}/4k$, where $B_i$ is the $i^\textrm{th}$ Bernoulli number, and $c'_{k} = c_{k}/2$ (with the conventions $c_0=0$ and $\Z/c_0'=2\Z$ the table actually holds for all $k\geq 0$).

Because the entries in the last row for $n=1,3,5$ mod $8$ are non-zero, there are immersion classes whose image under $ ( BJ \circ \Omega q)_* $ does not vanish.   Proposition~\ref{prop:hom_triv} tells us that such classes  do not admit an embedded representative. 
\end{proof}

\section{Proof of Proposition \ref{prop:hom_triv}}
\label{sec:proof-prop-refpr}

In this section, we derive Proposition \ref{prop:hom_triv} from 
Theorem \ref{thm:equivalence_spectra}. The idea behind the proof is that if a Thom spectrum is homotopy equivalent to a suspension spectrum  then the corresponding classifying map to $BH$ is null homotopic. Some care will be needed to address the fact that the free loop space of a manifold does not in general have the homotopy type of a finite CW complex.

We know that $L \to N$ is a homotopy equivalence. In \cite{Atiyah}, Atiyah proved that, for a homotopy equivalence $L \to N$, the stable sphere bundle of $TN-TL$ is trivial. The essential ingredient of the proof is  \cite[Proposition 2.8]{Atiyah}, which is due to Milnor and Spanier. For completeness, we shall presently give the proof of this result using more modern language.

To this end,  introduce the notion of a \emph{co-reduction} of a spectrum $X$ (with $H_0(X)\cong \Z$) which is a map from $X$ to the sphere spectrum $\Sigma^\infty S^0$, which induces an isomorphism on homology groups in non-positive degrees. Notice that since the sphere spectrum has trivial homology in negative degrees the same has to be true for $X$ - this is called a connective spectrum. Any suspension spectrum $\Sigma^\infty Y_+$ of a connected space $Y$ has a co-reduction by taking the suspension spectrum functor of the based map $Y_+ \to S^0$ given by collapsing all of $Y$ to the non-base point.

We will state the following lemma in slightly more generality than we need. It is standard that there is a Thom space associated to any sphere bundle, and thus a Thom spectrum associated to any stable sphere bundle (see e.g. \cite[Definition 5.3]{Rudyak}).

\begin{lem}[Proposition 2.8 of \cite{Atiyah}] \label{lem:co_reduction_trivial}
Let $Y$ be a connected finite $CW$ complex, and $V$ a $0$ dimensional stable spherical bundle over $Y$. The Thom spectrum of $V$  admits a coreduction if and only if $V$ is trivial, i.e. if and only if the classifying map
\begin{equation}
  Y \to BH
\end{equation}
is null homotopic.
  \end{lem}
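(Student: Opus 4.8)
The plan is to prove both directions via Thom spectra and Spanier–Whitehead duality, replacing the classical Milnor–Spanier argument with the language of coreductions introduced above. First, fix a classifying map $Y \to BH$ for $V$; since $Y$ is a finite CW complex, the stable sphere bundle $V$ is trivial if and only if this map is null-homotopic, so it suffices to relate triviality of $V$ to the existence of a coreduction of the Thom spectrum $\Th(V)$.

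For the easy direction, suppose $V$ is trivial, say of virtual rank $0$ after stabilising. Then $\Th(V)$ is (stably) equivalent to $\Sigma^\infty Y_+$, and since $Y$ is connected the collapse map $Y_+ \to S^0$ gives a coreduction as noted in the paragraph preceding the lemma. One should be slightly careful that the Thom isomorphism identifies $H_*(\Th(V))$ with $H_*(Y)$ in a way compatible with the collapse map in degrees $\le 0$; this is routine since $V$ has rank $0$ and the Thom class is the image of $1 \in H^0(Y)$.

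For the converse, suppose $\Th(V)$ admits a coreduction $c \co \Th(V) \to \Sigma^\infty S^0$. The key step is to feed this into Atiyah–Spanier–Whitehead duality: for a finite complex $Y$ with stable normal data, $\Th(\nu) \simeq DY_+$, the Spanier–Whitehead dual of $Y_+$, and more generally twisting by a stable sphere bundle $V$ corresponds on the dual side to twisting by $-V$. Concretely, one takes the smash product of $c$ with the identity on the dual object, or equivalently uses that $\Th(V) \wedge \Th(-V) \simeq \Sigma^\infty Y_+ \wedge DY_+$ together with the diagonal/evaluation maps, to convert the coreduction $c$ into a stable section of the spherical fibration $V$, i.e. a map $S^0 \to \Th(V)$ splitting the bottom cell. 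The existence of such a stable section is exactly the statement that the classifying map $Y \to BH$ is null-homotopic, since a stable spherical fibration over a finite complex is fibre-homotopy trivial precisely when it admits a section after one stabilisation (James–Thomas, or directly: the fibration $\mathrm{Map}(Y, BH) \to \ast$ obstruction vanishes iff the Thom space splits off its bottom cell).

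The main obstacle I expect is making the duality step precise enough to be rigorous without invoking more machinery than needed: one must know that $Y$ being a finite CW complex is genuinely used (to have a finite-dimensional dual and hence that homology isomorphisms in a range suffice), and one must track the degree shift carefully so that "coreduction" (isomorphism on $H_*$ in degrees $\le 0$) dualises exactly to "section of the bottom cell" rather than to some off-by-one condition. Once the bookkeeping is set up — choosing $V$ of virtual rank $0$, using the Thom isomorphism to normalise, and invoking finiteness to pass from a range of degrees to a genuine stable map — the argument is formal. I would present it by first recalling Atiyah duality, then observing that a coreduction of $\Th(V)$ is adjoint under this duality to a map $\Sigma^\infty S^0 \to \Th(-V)^\vee \wedge \text{(something)}$ whose bottom-cell behaviour forces fibre-homotopy triviality, citing James–Thomas for the final implication that a sectioned stable spherical fibration over a finite complex is trivial.
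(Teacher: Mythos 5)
Your first direction is fine, but the converse contains a genuine gap: the last step rests on a false statement. The Spanier--Whitehead dual of a coreduction of $Y^{V}$ is not "a map $S^0 \to \Th(V)$ splitting the bottom cell"; for $Y$ a closed manifold the dual of $Y^{V}$ is $Y^{-V-TY}$, so dualising a coreduction produces a \emph{reduction} (splitting of the top cell) of that other Thom spectrum, and neither a stable section of the fibration nor a reduction of a Thom spectrum implies fibre-homotopy triviality. Concretely, the claim "a stable spherical fibration over a finite complex is fibre-homotopy trivial precisely when it admits a section after one stabilisation" fails: take a vector bundle $E$ over $S^4$ whose image under the $J$-homomorphism $\pi_4(B\Or) \to \pi_4(BH) \cong \Z/24$ is nonzero; the sphere bundle of $E \oplus \epsilon^1$ has an evident section, yet its stable spherical fibration is nontrivial. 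On the dual side, the Spivak normal fibration of any closed manifold (or Poincar\'e complex) has reducible Thom spectrum while the fibration itself is in general nontrivial, so the condition you land on after dualising is strictly too weak. The intermediate identification $\Th(V) \wedge \Th(-V) \simeq \Sigma^\infty Y_+ \wedge DY_+$ is also incorrect as stated: $DY_+$ is the Thom spectrum of $-TY$, not of $-V$, and the diagonal only gives a map $\Th(V \oplus (-V)) \to \Th(V) \wedge \Th(-V)$, not an equivalence.

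The paper's proof avoids duality altogether, and the ingredient missing from your sketch is Dold's theorem applied fibrewise. Since $Y$ is finite, the coreduction is realised by an honest map $\Th(V \oplus \epsilon^a) \to S^a$; because the Thom space is the quotient of the fibrewise sphere bundle $S^+(V \oplus \epsilon^a)$ by its section at infinity, this yields a fibrewise map $S^+(V \oplus \epsilon^a) \to S^a \times Y$ over $Y$. The coreduction condition says exactly that this map has degree $\pm 1$ on one fibre, hence on every fibre since $Y$ is connected, and Dold's theorem then makes it a fibre homotopy equivalence; thus the spherical fibration is trivial and the classifying map $Y \to BH$ is null-homotopic. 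Any duality-flavoured write-up would still need this fibrewise degree argument (or an equivalent) at the end, so I would replace the appeal to James--Thomas by this direct argument.
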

  \begin{proof}
    Since $Y$ is finite the co-reduction of $Y^{V}$ is realized as an actual map
\begin{align}
  \Th(V \oplus \epsilon^a) \to S^a,
\end{align}
where $\epsilon^a$ is the trivial bundle of dimension $a$ and $V \oplus \epsilon^a$ is represented by an actual bundle over $Y$ of which we take the Thom space. Since the Thom space is the quotient of the disc bundle by the sphere bundle we may lift this to a fibre-wise map over $Y$
\begin{align}
  S^+(V \oplus \epsilon^a) \to S^a \times Y,
\end{align}
where $S^+$ is the sphere bundle over $Y$ obtained as the fibre-wise quotient of the disc with the sphere over $Y$. Since the original map was a co-reduction it follows that the composition
\begin{align}
  S^a \to S^+(V \oplus \epsilon^a) \to S^a \times Y \to S^a  
\end{align}
is a homology equivalence and hence a homotopy equivalence. Here the first map is the inclusion of an abitrary fibre of the sphere bundle. This implies that the sphere bundle $S^+(V \oplus \epsilon^a)$ is trivial, which is equivalent to the homotopy triviality of the classifying map $Y \to BH$.
  \end{proof}

Atiyah's result that the stable sphere bundle of $TN-TL$ is trivial whenever $L \to N$ is a homotopy equivalence then follows immediately by fixing an embedding $L \to \R^{K}$, and noting that the Thom collapse map associated to the product map $L \to \R^{K} \times N$ induces a homotopy equivalence   $\Sigma^{\infty}N_+  \to L^{TN-TL}$. Together with Theorem \ref{thm:equivalence_spectra}, this implies that we have a homotopy equivalence
\begin{equation} \label{eq:coreduction}
(\loops_{0} L)^{\eta} \simeq \Sigma^\infty (\loops_{0} N)_+.
\end{equation}

Note that this yields a coreduction of $(\loops_{0} L)^{\eta}  $, hence we could apply Lemma \ref{lem:co_reduction_trivial} to conclude that $(\loops_{0} L)^{\eta}  $ is a suspension spectrum if  $\loops_0 L$ had the homotopy type of a finite $CW$ complex. Since this is not the case in general, we obtain the weaker result that the classifying map for $\eta$ is a \emph{finitely phantom map}, i.e. that the restriction to finite sub-complexes is null homotopic. Section 2.1 of \cite{MayPonto} provides an account of the theory of phantom maps, but the reader should be aware that there are two related notions, as explained in  Warning 2.1.13 in \cite{MayPonto}. The results we use apply for either notion:
\begin{lem} \label{lem:phantom_map_0}
  Let $X$ be a $CW$ complex, equipped with a filtration $X_0 \subset X_1 \subset \cdots \subset X_n \subset \cdots \subset X$ by finite $CW$ complexes, and $Y$ a space with finite homotopy groups in each dimension. If 
  \begin{equation}
    h \co X \to Y
  \end{equation}
is a map whose restriction to $X_i$ is null homotopic for every $i$, then $h$ is null homotopic.
\end{lem}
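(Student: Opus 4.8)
The plan is to prove Lemma~\ref{lem:phantom_map_0} as a standard $\lim^1$ argument. First I would recall that, since each $X_i$ is a finite CW complex and $Y$ has finite homotopy groups in each dimension, the set $[X_i, Y]$ of pointed homotopy classes is finite for every $i$: this follows by induction up the skeleta of $X_i$, using that the attaching-cell cofibre sequences give a surjection (of pointed sets) from a finite product of groups $\prod \pi_k(Y)$ onto the difference between $[X_i^{(k)},Y]$ and $[X_i^{(k-1)},Y]$, and $Y$ has the homotopy type of a CW complex so that Milnor's theorem applies. Thus $\{[X_i,Y]\}_i$ is an inverse system of finite sets.

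Next I would invoke the Milnor exact sequence
\begin{equation}
  * \to {\lim_i}^1\, [\Sigma X_i, Y] \to [X,Y] \to \lim_i\, [X_i, Y] \to *,
\end{equation}
valid because $X = \colim_i X_i$ is the increasing union of the finite subcomplexes $X_i$ (so $X = \operatorname{hocolim}_i X_i$). The hypothesis that $h|_{X_i}$ is null for every $i$ says exactly that $h$ maps to the basepoint of $\lim_i [X_i,Y]$; hence $h$ lifts to an element of the $\lim^1$ term. But $\{[\Sigma X_i, Y]\}_i$ is again an inverse system of \emph{finite} sets (each $\Sigma X_i$ is a finite complex, $Y$ has finite homotopy groups), and an inverse system of finite sets satisfies the Mittag-Leffler condition, so its $\lim^1$ is trivial. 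Therefore $h$ is null homotopic.

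There is one technical wrinkle to address: the Milnor $\lim^1$ sequence in its clean form is stated for a tower of \emph{cofibrations} with colimit $X$, and for pointed spaces one should fix basepoints compatibly; since $X$ is a CW complex with the filtration by subcomplexes, the inclusions $X_i \hookrightarrow X_{i+1}$ are cofibrations and the colimit topology is the CW topology, so this is automatic. One should also note the sequence is a sequence of pointed sets (and groups only where it makes sense), but triviality of $\lim^1$ of finite sets still gives the conclusion that the fibre over the basepoint of $\lim_i[X_i,Y]$ is a single point. I expect the main obstacle — really the only point requiring care rather than citation — to be the verification that $[X_i,Y]$ and $[\Sigma X_i,Y]$ are finite, i.e. assembling the skeletal induction and correctly handling the non-abelian/pointed-set issues in low degrees; everything else is a direct appeal to the Milnor sequence and the Mittag-Leffler property of towers of finite sets. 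Alternatively, and perhaps more cleanly for the write-up, I would phrase the whole argument using that $Y$ is weakly equivalent to a product $\prod_m K(\pi_m Y, m)$ of Eilenberg-MacLane spaces (after localization one may even reduce to finite Postnikov stages), reducing the statement to the classical fact that a map into a finite-type generalized Eilenberg-MacLane space which is phantom on finite subcomplexes is null — but the $\lim^1$ argument above is self-contained and is what I would actually present.
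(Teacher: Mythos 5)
Your proposal is correct and follows essentially the same route as the paper: the paper also observes that finiteness of the homotopy groups of $Y$ makes each group $\sqbrac{\Sigma X_n, Y}$ finite, so that $\lim^1$ vanishes, and then concludes via the Milnor exact sequence (citing Proposition 2.1.9 and Corollary 2.1.11 of \cite{MayPonto}). Your additional details (skeletal induction for finiteness, the cofibration/basepoint remarks) simply flesh out steps the paper delegates to those references.
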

\begin{proof}
The finiteness of the homotopy groups of $Y$ implies that each of the groups $\sqbrac{\Sigma X_n,Y}$ is finite. This implies that $\lim^1\sqbrac{\Sigma X_n,Y}$ is trivial, and hence $h$ is null homotopic by Proposition 2.1.9 and Corollary 2.1.11 of \cite{MayPonto}.
\end{proof}

We have now gathered all the ingredients to prove our reformulation of Theorem \ref{thm:equivalence_spectra}:
\begin{proof}[Proof of Proposition \ref{prop:hom_triv}]
 Let $\LLCW \to \loops_{0} L$ be a CW approximation with countably many cells, which is a homotopy equivalence by \cite{Milnor}. Since $\loops_{0} L$ is connected we may assume that $\LLCW$ contains only one 0-cell. Define $h \co \LLCW \to BH$ as the composition
  \begin{align}
    \label{eq:h_definition}
    h\co \LLCW \simeq \loops_{0} L \xrightarrow{\Tuoloopzero} \Z\times B\Or \xrightarrow{BJ} BH,    
  \end{align}
  which we wish to prove is homotopic to a constant map.

By construction, $h$ classifies the pullback to $ \LLCW$ of the stable sphere bundle associated to $-\eta$, which we shall continue denoting $-\eta$ by abuse of notation. Equation \eqref{eq:coreduction} implies that the Thom space of $\eta$ over $\LLCW$ admits a coreduction. By Lemma \ref{lem:co_reduction_trivial}, we conclude that $h$ is a phantom map (i.e. its restriction to every finite subcomplex is null-homotopic). Lemma \ref{lem:phantom_map_0} then implies that $h$ is itself null-homotopic, hence that  $BJ \circ \Tuoloopzero$ is null-homotopic as well.
\end{proof}

\section{Construction of the Floer homotopy type}
\label{sec:construct_spectrum}

In this section we recall the construction in \cite{kragh-transfer} of spaces
that (are expected to) represent Hamiltonian Floer cohomology. Here
\emph{represent} is taken to mean that the homology of these 
spaces are naturally isomorphic to  Floer cohomology.  More precisely, these spaces represent a twisted version of Floer cohomology, as discussed in \cite{kragh-nearby}. Since we will
never need the relation of these spaces to Floer cohomology (except
as a useful analogy) the twisting is irrelevant to this paper.

Our notion of spectra is very naive, and we refer to \cite{Adams} for
an introduction to CW spectra. The only difference between these and the
spectra we use is that each level of our spectra is in fact a compact
Hausdorff space and the structure maps are only assumed to  be continuous. Some call these
pre-spectra since we do not assume that the structure maps are adjoint
to weak equivalences. Another slight difference is that we only define
every $(n+k)^\textrm{th}$ space and thus the structure maps are maps
$\Sigma^{n+k}X_{(n+k)r} \to X_{(n+k)(r+1)}$.

Let us fix a Riemannian metric on $N$, and write $\norm{p}$ for the
length of a cotangent vector.  We say that a Hamiltonian $H\colon \TN
\to \R$ is linear at infinity if, away from a compact set, it agrees
with
\begin{align}
  \mu \norm{p} + c
\end{align}
for a strictly positive constant $\mu$.  Later, we shall also assume that $\mu$ is not the length of any closed geodesic in $N$.

In \cite[Section 5]{kragh-transfer} a sequence of finite dimensional manifolds
$\Tlr N$ (denoted there by $T^*\Lambda_r N$, see \cite[Equation (18)]{kragh-transfer}) were defined as
approximations to the free loop space $\lT N$, together with a sequence of
functions 
\begin{align}
  \finr \colon \Tlr N \to \R
\end{align}
which approximate the action functional
\begin{align}
  A(\gamma)= \int \gamma^* \lambda - \int \gamma^*H dt.
\end{align}
These finite dimensional approximations admit pseudo-gradients $X_r$, which satisfy a Lipschitz like condition (to ensure that the flow is defined for all time, and gradient trajectories do not run of to infinity), implying that the negative gradient flows have well-defined Conley indices for $r$ sufficiently large by \cite[Lemma 5.4 and Lemma 2.9]{kragh-transfer}. In particular, we may consider an index pair $(A_r,B_r)$ for $(\finr,X_r)$ containing all the critical points of $\finr$ (see \cite{Conley} and \cite[Section 2]{kragh-transfer} for the definition of index pairs). 

By  \cite[Proposition 6.3]{kragh-transfer}, passing from $r$ to $r+1$ simply changes the
Conley index by a relative Thom space construction using the vector
bundle $TN$. That is, there are canonical (up to a contractible choice) homotopy equivalences
\begin{align} \label{eq:1}
  DTN_{\mid A_r} / (STN_{\mid A_r} \cup DTN_{\mid B_r}) \to A_{r+1} / B_{r+1}.
\end{align}
Here, by abuse of notation, we also denote by $TN$
the pull back of $TN$ to $A_r \subset \Tlr N \to N$, where the latter
map is the evaluation at the base-point composed with the projection to
$N$. 

To produce a spectrum from this sequence of pairs we need to
compensate for the fact that $TN$ is not a trivial bundle. So let
$\nu$ be a $k$-dimensional normal bundle to $TN$ (pulled back to
$A_r$), and assume we have a bundle isomorphism $TN\oplus \nu \cong
\R^{n+k} \times A_r$. Consider the vector bundle
\begin{align}\label{eq:2}
  \nu_r = \nu^{\oplus r} \to A_r
\end{align}
and define the $((n+k)r)^\textrm{th}$ space in the spectrum $Z(H^\mu)$ by
\begin{align}\label{eq:3}
  Z(H^\mu)_{(n+k)r} = D \nu_{r\mid A_r} / (S\nu_{r\mid A_r} \cup D\nu_{r\mid B_r}).
\end{align}
This is the \emph{Thom space} of $\nu_r \to A_r$ \emph{relative} to $B_r$, which gives rise to the left hand side of Equation~\eqref{eq:homotopy_equivalence} (see Remark 1.1 in \cite{kragh-transfer}).

The homotopy equivalences in Equation~\eqref{eq:1} turn into homotopy equivalences
\begin{align} \label{eq:4}
  \Sigma^{n+k} Z(H^\mu)_{(n+k)r} \simeq Z(H^\mu)_{(n+k)(r+1)}.
\end{align}
These where used as structure maps in defining a spectrum $Z(H^\mu)$. By \cite[Corollary 11.6]{kragh-transfer}, we have (for $\mu$ different from the length of a closed geodesic) a canonical (up to a contractible choice) homotopy equivalence
\begin{align}
  \label{eq:6}
  Z(H^\mu) \simeq \Sigma^{\infty} \loops^\mu N_+,
\end{align}
where the superscript $\mu$ means restriction to loops of length at most $\mu$.

By changing the Hamiltonian to have a higher slope $\mu'$ at infinity, one can produce continuation maps, which in the proof of Corollary 11.6 in \cite{kragh-transfer} was identified with the suspension spectrum functor on the usual inclusions of loop spaces $\loops^\mu N \subset \loops^{\mu'}
N$. Hence taking the colimit we canonically recover the spectrum
\begin{align}
  \colim_{\mu\to \infty} Z^{\mu} \simeq \Sigma^\infty \loops N_+.
\end{align}
Restricting this entire construction to the component of contractible loops in $\loops T^* N$ yields the spectrum $\Sigma^\infty \loops_0 N_+$.

\section{Viterbo's transfer map on spectra}
\label{sec:transfer_map}

In this section we recall the construction of the transfer map
\begin{align*}
  (\loops N)^{-TN} \to (\loops L)^{-TL+\eta},
\end{align*}
which when using the alternate construction explained in Remark 1.1 of \cite{kragh-transfer} becomes
\begin{align} \label{eq:11}
  \Sigma^\infty(\loops N)_+ \to (\loops L)^{TN-TL+\eta}.
\end{align}
Since the virtual bundle $TN-TL+\eta$ may not be representable on all of $\loops L$ as a difference of finite dimensional bundles, we have to use the following procedure to define the Thom spectrum: given a virtual bundle $V$ over a space $X$, pick an ascending filtration of $X$
\begin{align}
  K_1 \subset K_2 \subset \cdots \subset K_n \subset \cdots \subset X,
\end{align}
where each $K_i$ has the homotopy type of a finite CW complex, then realize the Thom spectrum as described in the introduction (using the virtual bundle $V$) on each of these, and then take the limit in the category of spectra using the inclusions.


In Section~\ref{sec:construct_spectrum}, we chose index pairs $(A_r,B_r)$ so that $A_{r}$  contains all critical points of
the function $\finr$. However, one could easily restrict to only consider critical points with critical value bounded below.  For $a\in
\R$, this defines a restricted index pair
\begin{align} \label{eq:10}
  (A_r',B_r') = \big((A_r \cap S_r^{-1}([a,\infty]) , (B_r \cap
  S_r^{-1}([a,\infty])) \cup (A_r \cap S_r^{-1}(a))\big)
\end{align}
which is the target of  a canonical quotient map
\begin{align}
  A_r /B_r \to A_r' / B_r'.
\end{align}
If $\zeta \to A_r$ is a vector bundle we can ``correct'' this map by
the relative Thom spaces construction to get a map
\begin{align}
  D\zeta / (S\zeta \cup D\zeta_{\mid B_r}) \to
  D\zeta_{\mid A_r'} / (S\zeta_{A_r'} \cup D\zeta_{\mid B_r'}).
\end{align}
Since we used such corrected Conley indices to define the source
spectrum in Section~\ref{sec:construct_spectrum} this is the way we
will construct the transfer map.

By the Darboux-Weinstein theorem, and by choosing a suitable Riemannian structure on
$L$ we can asssume that $DT^*L \subset T^*N$ is a symplectic
embedding. The usual way of producing the Viterbo transfer
(restriction) map is by choosing Hamiltonians as in
Section~\ref{sec:construct_spectrum}, whose restrictions to $DT^*L$ agree with
\begin{align}
\mu_L \norm{p_L} + c_L
\end{align}
for $0<\epsilon<\norm{p_L}<1-\epsilon$. If $\mu_L$ is not a geodesic length
on $L$ and large enough (depending on $\mu$) one can make sure that
all the critical action values coming from periodic orbits in
$D_\epsilon T^*L$ are larger than the rest of the critical values. We
thus define $(A'_r,B'_r)$ as above, so that $A'_r$ only contains these critical
points. The above quotient map on corrected Conley indices thus induces
a map
\begin{align}
  Z(H^\mu)_{(n+k)r} \to W(H^{\mu_L})_{(n+k)r},
\end{align}
where $W(H^{\mu_L})_{(n+k)r}$ is defined as the relative Thom space
construction of $\nu_r$ restricted to the index pair
$(A'_r,B'_r)$. The canonical homotopy equivalences from
Section~\ref{sec:construct_spectrum} restrict to homotopy
equivalences
\begin{align}
  \Sigma^{n+k} W(H^{\mu_L})_{(n+k)r} \simeq W(H^{\mu_L})_{(n+k)(r+1)},
\end{align}
and thus we get a spectrum $W(H^{\mu_L})$. Indeed, Lemma 8.1 in
\cite{kragh-transfer} states this compatibility. Again there are
continuation maps of spectra $W(H^{\mu_L}) \to W(H^{\mu'_L})$ for
$\mu_L'>\mu_L$, and moreover the quotient maps commute with these
(keeping $\mu_L$ large enough depending on $\mu$) to form a map of
spectra
\begin{align} \label{eq:9}
  \Sigma^\infty\loops N_+ \to \colim_{\mu_L \to \infty} W(H^{\mu_L})
\end{align}
as discussed in Section 14 of \cite{kragh-transfer}. At each stage $\mu_L$ there are homotopy equivalences
\begin{align}\label{eq:5}
  W(H^{\mu_L}) \simeq \loops^{\mu_L} L^{TN-TL+\eta},
\end{align}
where $\eta$ is the virtual
vector bundle over $\loops L$ described in
Section~\ref{sec:appl-lagr-embedd}. Furthermore, the continuation maps
are compatible with the Thom spectrum construction and are induced by the
inclusion of the space of shorter loops into the space of longer loops making
\begin{align} \label{eq:8}
  \colim_{\mu_L \to \infty} W(H^{\mu_L}) \simeq \loops L^{TN-TL+\eta}.
\end{align}
This is the result of Corollary 14.4 in \cite{kragh-transfer}. Because $L \to N$ is a homotopy equivalence, restricting the above construction to the component of contractible loops in $\loops T^* N$ yields the spectrum $\loops_{0} L^{TN-TL+\eta}$.

Combining Equation \eqref{eq:9}, \eqref{eq:5}, and \eqref{eq:8}, we obtain Equation \eqref{eq:11}. By only considering contractible loops, we obtain the desired map:
\begin{equation}
  \Sigma^\infty \loops_{0} N_+   \to \left( \loops_{0} L\right)^{TN-TL+\eta}.
\end{equation}

\section{Proof of Theorem \ref{thm:main-technical-theorem}}
\label{sec:diagram_commutes}

Let $(A,B)$ be any pair of spaces. There is a map
\begin{align}
  A/B \to A_+ \wedge A/B
\end{align}
induced by the diagonal $\Delta \colon A \to A \times
A$. Indeed, this is defined because $\Delta(B) \subset A\times B$ and
$A\times A / (A\times B) \cong A_+ \wedge A/B$.

Since the index pairs $(A_r,B_r)$ from
Section~\ref{sec:construct_spectrum} sit inside $\loops \TN $ (it is
constructed using finite dimensional approximations naturally embedded into
it) we obtain a composition
\begin{align}
  A_r/B_r \to A_{r+} \wedge A_r/B_r \to \loops \TN_+ \wedge  A_r/B_r . 
\end{align}
The correction done to the Conley index in Equation~\eqref{eq:3}
is easy to incorporate by using the projection from the disc bundle
$D\nu_{r\mid A_r}$ to $A_r$. In this way we may define a map
\begin{align}
  Z(H^\mu)_{(n+k)r} \to \loops \TN_+ \wedge Z(H^\mu)_{(n+k)r}.
\end{align}
These are compatible with the structure maps from
Equation~\eqref{eq:4} and thus we get a map of spectra
\begin{align}
  Z(H^\mu) \to \loops \TN_+ \wedge Z(H^\mu).
\end{align}
Here the smash product of a based space with a spectrum is simply
defined by the level wise smash product. Taking the limit over $\mu$ with respect to continuation maps \cite[Corollary 11.4]{kragh-transfer} yields that the induced map on spectra (defined only up to homotopy at this point) is the standard co-module map
\begin{equation}
   \Sigma^\infty \loops N_+ \to \loops N_+ \wedge \Sigma^\infty \loops N_+.
\end{equation}
We would like to have a similar construction for $\loops L$. For this we need the result from \cite{kragh-transfer} that; if you set up your Hamiltonians carefully (when defining the transfer map) then there are in fact index pairs $(A_r',B_r')$ defining each stage in the target spectrum in Equation~\eqref{eq:homotopy_equivalence}, which are completely contained inside $\loops_r DT^*L$ (i.e. a finite dimensional version of loops in this disc bundle). This is Proposition 10.1 with $M=DT^*L$ in that paper, and this is a topological version of the fact that flow lines from the critical points close to $L$ stay within $\loops D\TL$ (used in the proof of Viterbo functoriality). Using such index pairs we get for each $\mu_L$ an induced map of spectra
\begin{align}
  W(H^{\mu_L}) \to \loops \TL_+ \wedge W(H^{\mu_L}),
\end{align}
which by \cite[Corolary 14.5]{kragh-transfer} when we take the limit induce the standard ``co-algebra'' structure map (up to homotopy):
\begin{align*}
  (\loops L)^{TN-TL+\eta} \to \loops L_+ \wedge (\loops L)^{TN-TL+\eta}.
\end{align*}


The fact that the diagram in Equation~\eqref{eq:commutative_diagram} homotopy commutes now follows from the fact that the diagrams (before taking limits)
\begin{align*}
  \xymatrix{
    Z(H^\mu)  \ar[d] \ar[rr] & & W(H^{\mu_L}) \ar[d] \\
    \loops \TN_+ \wedge Z(H^\mu) \ar[r] & \loops \TN_+ 
    \wedge W(H^{\mu_L}) & \loops D\TL_+ \wedge  W(H^{\mu_L}) \ar[l] \\
  }
\end{align*}
homotopy commutes. Indeed; level-wise and before including $\loops_r T^*N$ into $\loops T^*N$, if we did not have to compose with the flow of the negative pseudo-gradient to have the quotient map well-defined this would commute on the nose. So, by composing the counterclockwise map with the same flow on the first factor provides a level-wise homotopy.


\section{Proof of Theorem \ref{thm:equivalence_spectra}}
\label{sec:homotopy_equivalence_of_spectra}

Let $H\Z$ be the Eilenberg-Maclane ring spectrum for the ring $\Z$. We have a Thom isomorphism
\begin{align} \label{eq:13}
  \tau_L \colon  \loops_0 L^{TN-TL+\eta} \wedge H\Z & \to \Sigma^{\infty} \loops_0 L_+ \wedge H\Z.
\end{align}
Note that there is no shift here since the Maslov class vanishes on the contractible loops and thus $\dim(\eta)=0$. Also we have used the fact that $L\to N$ is relatively oriented and spin (because it is a homotopy equivalence) to conclude that $\eta$ and $TN-TL$ are oriented vector bundles (see \cite[Corollary 7.5]{kragh-nearby}).

The Thom isomorphism is induced by taking the product with the Thom class, which corresponds to a map
\begin{align}
  p_L & \colon \loops_0 L^{TN-TL+\eta} \to H\Z
\end{align}
which represents a generator of the zeroth cohomology of $  \loops_0 L^{TN-TL+\eta}$. Up to homotopy, $p_L$  only depends on a choice of orientation on the stable vector bundle. The Thom class can be thought of as a homological version of the coreductions consider in Section~\ref{sec:proof-prop-refpr}. In fact the analogue
\begin{align*}
  p_N \colon \Sigma^{\infty} \loops_0 N_+ \to H\Z
\end{align*}
for $N$ is the coreduction composed with the unit $\Sigma^{\infty}S^0 \to H\Z$.

For any map of spectra $p : X \to H\Z$ we define the composition
\begin{align*}
  \widetilde{p} : X\wedge H\Z \xrightarrow{p\wedge\id} H\Z\wedge H\Z \to H\Z,
\end{align*}
where the last map is the product on $H\Z$, which induce the cup product on cohomology. One recovers the Thom isomorphism by composing the coalgebra map smash the identity on $H\Z$ with $\id \wedge \widetilde{p_L}$:
\begin{align*}
  \loops_0 L^{TN-TL+\eta} \wedge H\Z \to \Sigma^{\infty}\loops_{0}L_+ \wedge \loops_0 L^{TN-TL+\eta} \wedge H\Z \to \Sigma^{\infty}\loops_{0}L_+ \wedge H\Z
\end{align*}

By smashing all entries in the diagram in Equation~\eqref{eq:commutative_diagram} with $H\Z$ from the right (and morphisms with the identity on $H\Z$) we may extend the diagram by composing with the map $\id \wedge \widetilde{p_L}$ and the analogue for $N$ to get the diagram
\begin{align} \label{eq:7}
   \pare*{\vcenter{\xymatrix{ 
    \Sigma^{\infty} \loops_{0}  N_+  \ar[rr]^{\loops_0 j_{!}}\ar[rr]\ar[d] &&
    \loops_{0} L^{TN-TL + \eta} \ar[d] \\ 
    \loops_{0} N_+ \wedge \Sigma^{\infty} \loops_{0} N_+ 
    \ar[r]^{\id\wedge\loops_0 j_{!}} \ar[d]^{\id\wedge \widetilde{p_N}} &
    \loops_{0} N_+ \wedge\loops_{0} L^{TN-TL + \eta} 
    \ar[d]^{\id \wedge \widetilde{p_L}} &
    \loops_{0} L_+\wedge\loops_{0} L^{TN-TL+\eta}
    \ar[l]_-{\loops_0 j\wedge\id} \ar[d]^{\id \wedge \widetilde{p_L}} \\
    \Sigma^{\infty} \loops_{0} N_+ \ar[r]^{\id} &
    \Sigma^{\infty} \loops_{0} N_+ &
    \Sigma^{\infty} \loops_{0} L_+ \ar[l]_-{\Sigma^{\infty}\loops_0 j}
  }}} \wedge H\Z.
\end{align}
Here we have slightly abused notation since two of the lower
vertical maps are only defined after smashing with $H\Z$.
Since the top left vertical map is induced by the diagonal, the
vertical composition from the top left corner to the bottom left is
in fact the identity. The map from the top right corner to the bottom right is $\tau_L$, and the bottom right
rectangle commutes for obvious reasons. To see that the bottom left rectangle commutes, it suffices to prove the commutativity of the diagram
\begin{align} \label{eq:12}
  \xymatrix@C=5em{
    \Sigma^{\infty} \loops_{0} N_+ \wedge H\Z \ar[r]^-{\loops_0 j_! \wedge \id_{H\Z}} \ar[d]^-{p_N} &
    \loops_{0} L^{TN-TL+\eta} \wedge H\Z \ar[d]^-{p_L} \\
    H\Z \ar[r]^-{\id} &  H\Z.
  }
\end{align}
By  \cite[Corollary 8.3]{kragh-transfer},  $\loops_0 j_!$, extends the  classical transfer map $j_! \colon
\Sigma^{\infty} N_+ \to L^{TN-TL}$ obtained by decomposing the total space of a trivial bundle over $N$ of sufficiently high rank as a direct sum of the tangent space of $N$ with a normal bundle, and collapsing onto a neighbourhood of $L$. Passing to homotopy groups, we see that the top map in Equation \eqref{eq:12} is the map induced by $\loops_0 j_! $ on homology. The diagram commutes if we pick the Thom classes (orientations) compatibly with each other. To see that this can be done, note that the composition of $j$ with the projection $T^*N \to N$ is of degree one, so the map on $H_0$ is therefore an isomorphism. We can therefore pick the orientation inducing $p_L$ so that the diagram commutes.

It now follows that since $\tau_L$ and $\Sigma^{\infty}\loops_0 j \wedge \id_{H\Z}$ are equivalences so is $\loops_0 j_! \wedge
\id_{H\Z}$, which in turn by the Hurewitz isomorphism implies that $\loops_0 j_!$ itself is an equivalence.

\begin{bibdiv}
\begin{biblist}
\bib{abouzaid-exotic}{article}{
   author={Abouzaid, Mohammed},
   title={Framed bordism and Lagrangian embeddings of exotic spheres},
   journal={Ann. of Math. (2)},
   volume={175},
   date={2012},
   number={1},
   pages={71--185},
   issn={0003-486X},
   review={\MR{2874640}},
   doi={10.4007/annals.2012.175.1.4},
}

\bib{abouzaid-fibre}{article}{
   author={Abouzaid, Mohammed},
   title={A cotangent fibre generates the Fukaya category},
   journal={Adv. Math.},
   volume={228},
   date={2011},
   number={2},
   pages={894--939},
   issn={0001-8708},
   review={\MR{2822213 (2012m:53192)}},
   doi={10.1016/j.aim.2011.06.007},
}

\bib{abouzaid-nearby}{article}{
   author={Abouzaid, Mohammed},
   title={Nearby Lagrangians with vanishing Maslov class are homotopy
   equivalent},
   journal={Invent. Math.},
   volume={189},
   date={2012},
   number={2},
   pages={251--313},
   issn={0020-9910},
   review={\MR{2947545}},
   doi={10.1007/s00222-011-0365-0},
}

\bib{Adams}{book}{
    AUTHOR = {Adams, J. F.},
     TITLE = {Stable homotopy and generalised homology},
      NOTE = {Chicago Lectures in Mathematics},
 PUBLISHER = {University of Chicago Press},
   ADDRESS = {Chicago, Ill.},
      YEAR = {1974},
     PAGES = {x+373},
}

\bib{Adams}{article}{
    AUTHOR = {Adams, J. F.},
     TITLE = {On the groups {$J(X)$}. {IV}},
   JOURNAL = {Topology},
  FJOURNAL = {Topology. An International Journal of Mathematics},
    VOLUME = {5},
      YEAR = {1966},
     PAGES = {21--71},
      ISSN = {0040-9383},
   MRCLASS = {55.40},
  MRNUMBER = {0198470 (33 \#6628)},
MRREVIEWER = {E. Dyer},
}

\bib{Atiyah}{article}{
   author={Atiyah, M. F.},
   title={Thom complexes},
   journal={Proc. London Math. Soc. (3)},
   volume={11},
   date={1961},
   pages={291--310},
   issn={0024-6115},
   review={\MR{0131880 (24 \#A1727)}},
}

\bib{Audin}{article}{
   author={Audin, Mich{\`e}le},
   title={Fibr\'es normaux d'immersions en dimension double, points doubles
   d'immersions lagragiennes et plongements totalement r\'eels},
   language={French},
   journal={Comment. Math. Helv.},
   volume={63},
   date={1988},
   number={4},
   pages={593--623},
   issn={0010-2571},
   review={\MR{966952 (89m:57032)}},
   doi={10.1007/BF02566781},
}

\bib{chaperon}{article}{
   author={Chaperon, Marc},
   title={Une id\'ee du type ``g\'eod\'esiques bris\'ees'' pour les
   syst\`emes hamiltoniens},
   language={French, with English summary},
   journal={C. R. Acad. Sci. Paris S\'er. I Math.},
   volume={298},
   date={1984},
   number={13},
   pages={293--296},
   issn={0249-6291},
   review={\MR{765426 (86f:58049)}}
}
\bib{Conley}{book}{
   author={Conley, Charles},
   title={Isolated invariant sets and the Morse index},
   series={CBMS Regional Conference Series in Mathematics},
   volume={38},
   publisher={American Mathematical Society},
   place={Providence, R.I.},
   date={1978},
   pages={iii+89},
   isbn={0-8218-1688-8},
   review={\MR{511133 (80c:58009)}},
}
\bib{ekholm-smith}{article}{
   author={Ekholm, Tobias},
   author={Smith, Ivan},
title={Exact Lagrangian immersions with a single double point},
eprint={arXiv:1111.5932},
}
\bib{gromov-pdr}{book}{
   author={Gromov, Mikhael},
   title={Partial differential relations},
   series={Ergebnisse der Mathematik und ihrer Grenzgebiete (3) [Results in
   Mathematics and Related Areas (3)]},
   volume={9},
   publisher={Springer-Verlag},
   place={Berlin},
   date={1986},
   pages={x+363},
   isbn={3-540-12177-3},
   review={\MR{864505 (90a:58201)}},
}

\bib{kragh-transfer}{article}{
   author = {Kragh, Thomas},
    title = {The Viterbo Transfer as a Map of Spectra},
   eprint = {arXiv:0712.2533},
}

\bib{kragh-nearby}{article}{
   author = {Kragh, Thomas},
    title = {Parametrized Ring-Spectra and the Nearby Lagrangian Conjecture},
   eprint = {arXiv:1107.4674},
}

\bib{Lees}{article}{
   author={Lees, J. Alexander},
   title={On the classification of Lagrange immersions},
   journal={Duke Math. J.},
   volume={43},
   date={1976},
   number={2},
   pages={217--224},
   issn={0012-7094},
   review={\MR{0410764 (53 \#14509)}},
}

\bib{MayPonto}{book}{
   author={May, J. P.},
   author={Ponto, K.},
   title={More concise algebraic topology},
   series={Chicago Lectures in Mathematics},
   note={Localization, completion, and model categories},
   publisher={University of Chicago Press},
   place={Chicago, IL},
   date={2012},
   pages={xxviii+514},
   isbn={978-0-226-51178-8},
   isbn={0-226-51178-2},
   review={\MR{2884233 (2012k:55001)}},
}

\bib{Milnor}{book}{
  author = {Milnor, J.},
  title = {Morse theory},
  series = {Based on lecture notes by M. Spivak and R. Wells. Annals of
    Mathematics Studies, No. 51},
  publisher = {Princeton University Press},
  place = {Princeton, N.J.},
  date = {1963},
  pages = {vi+153},
}

\bib{Ravenel}{book}{
   author={Ravenel, Douglas C.},
   title={Complex cobordism and stable homotopy groups of spheres},
   series={Pure and Applied Mathematics},
   volume={121},
   publisher={Academic Press, Inc., Orlando, FL},
   date={1986},
   pages={xx+413},
   isbn={0-12-583430-6},
   isbn={0-12-583431-4},
   review={\MR{860042 (87j:55003)}},
}

\bib{Rudyak}{book}{
   author={Rudyak, Yuli B.},
   title={On Thom spectra, orientability, and cobordism},
   series={Springer Monographs in Mathematics},
   note={With a foreword by Haynes Miller},
   publisher={Springer-Verlag, Berlin},
   date={1998},
   pages={xii+587},
   isbn={3-540-62043-5},
   review={\MR{1627486 (99f:55001)}},
}

\bib{viterbo-JDG}{article}{
   author={Viterbo, Claude},
   title={Exact Lagrange submanifolds, periodic orbits and the cohomology of
   free loop spaces},
   journal={J. Differential Geom.},
   volume={47},
   date={1997},
   number={3},
   pages={420--468},
   issn={0022-040X},
   review={\MR{1617648 (99e:58080)}},
}

\bib{viterbo}{article}{
  author = {Claude Viterbo},
  title = {Functors and computations in Floer cohomology. Part II},
 eprint = {http://www.math.polytechnique.fr/cmat/viterbo/Prepublications.html},
}

\end{biblist}
\end{bibdiv}

\end{document}